\newtheorem{assumption}{Assumption}[section]
\title{Accelerating Nesterov's Method for Strongly Convex Functions with
  Lipschitz Gradient}
\author{Xiangrui Meng\thanks{ICME, Stanford University, Stanford, CA 94305 ({\tt
      mengxr@stanford.edu})} \and Hao Chen\thanks{Department of Statistics,
    Stanford University, Stanford, CA 94305 ({\tt haochen@stanford.edu})}}
\begin{document}
 
\maketitle

\begin{abstract}
  We modify Nesterov's constant step gradient method for strongly convex
  functions with Lipschitz continuous gradient described in Nesterov's
  book. Nesterov shows that $f(x_k) - f^* \leq L \prod_{i=1}^k ( 1 - \alpha_k )
  \| x_0 - x^* \|_2^2$ with $\alpha_k = \sqrt{\rho}$ for all $k$, where $L$ is
  the Lipschitz gradient constant and $\rho$ is the reciprocal condition number
  of $f(x)$. Hence the convergence rate is $1-\sqrt{\rho}$. In this work, we try
  to accelerate Nesterov's method by adaptively searching for an $\alpha_k >
  \sqrt{\rho}$ at each iteration. The proposed method evaluates the gradient
  function at most twice per iteration and has some extra Level 1 BLAS
  operations. Theoretically, in the worst case, it takes the same number of
  iterations as Nesterov's method does but doubles the gradient calls. However,
  in practice, the proposed method effectively accelerates the speed of
  convergence for many problems including a smoothed basis pursuit denoising
  problem.
\end{abstract}

\begin{keywords}
  first-order method, gradient method, Nesterov's optimal method, strongly
  convex function, strong convexity, Lipschitz continuous gradient, basis
  pursuit denoising, BDPN
\end{keywords}

\begin{AMS}
  90C25, 
  90C06, 
  65F10.  
\end{AMS}


\section{Introduction}
\label{sec:intro}

First-order methods for convex optimization have drawn great interest in recent
years as the problem scale goes larger and larger. High-order methods do not fit
the scene quite well because they generally need more memory than first-order
methods and take many more operations per iteration. However, the slow
convergence rate of first-order methods prevents them from practical use. For
example, the constant step gradient descent method converges at the speed of
$\mathcal{O}(1/k)$ for functions with Lipschitz gradient (with constant $L$),
where $k$ is the number of iterations.  It means that we need one million
iterations to reach $f(x_k) - f^* < \mathcal{O}(10^{-6}) ( f(x_0) - f^*
)$. Nesterov~\cite{nesterov1983method} advanced the field with a first-order
method converging at the speed of $\mathcal{O}(1/k^2)$. We refer to this method
as $\mathcal{N}_L$. To reach the same precision as in the previous example,
$\mathcal{N}_L$ only needs one thousand iterations. Nesterov not only shows the
method is faster than the gradient descent method but also shows that it is
optimal among all first-order methods on functions with Lipschitz gradient. To
seek a first-order method with higher-order convergence, we have to restrict the
functions of interest. Nesterov~\cite{nesterov2004introductory} considered
functions with both Lipschitz gradient and strong convexity (with parameter
$\mu$), and he constructed another first-order method with linear convergence
rate, referred to as $\mathcal{N}_{\mu,L}$. The gradient descent method can also
achieve linear convergence on those functions. Nevertheless, to reach a given
precision, the number of iterations the gradient descent method needs is
$\mathcal{O}(\kappa)$, where $\kappa$ is the condition number of the objective
function, while the number of iterations $\mathcal{N}_{\mu,L}$ needs is only
$\mathcal{O}(\sqrt{\kappa})$, which is proved to be optimal too.

In this work, we are interested in accelerating $\mathcal{N}_{\mu,L}$ in a
practical way. In section \ref{sec:nesterov}, we briefly review how Nesterov
constructs $\mathcal{N}_{\mu,L}$. Then we present our modification to Nesterov's
method in section \ref{sec:nesterov-alpha}. Related work on improving Nesterov's
methods is discussed in section \ref{sec:related-work}, and section
\ref{sec:numer-exper} reports numerical results.



\section{Nesterov's method}
\label{sec:nesterov}

We briefly review Nesterov's constant step gradient method for strongly convex
functions with Lipschitz gradient, referred to as $\mathcal{N}_{\mu,L}$, and its
convergence properties.  The content is mostly taken from
Nesterov~\cite{nesterov2004introductory} with some simplifications. We keep this
section short and concise but detail how Nesterov constructs the method because
our modification is based on it. We begin with the definition of $S_{\mu,L}$,
the class of strongly convex functions with Lipschitz gradient, and an
assumption on first-order methods.

\begin{definition}
  A continuous differentiable function $f(x)$ is in $S_{\mu,L}(\Omega)$ for some
  $L \geq \mu > 0$ if for any $x, y \in \Omega$ we have both of the following:
  \begin{equation}
    \label{eq:lip-grad}
    \| f'(x) - f'(y) \|_2 \leq L \| x - y \|_2,
  \end{equation}
  \begin{equation}
    \label{eq:scvx}
    f(y) \geq f(x) + \langle f'(x), y - x \rangle
    + \frac{\mu}{2} \| y - x \|_2^2.
  \end{equation} 
  The value $\kappa = L/\mu$ is called the condition number of $f(x)$ and $\rho
  = 1/\kappa$ is called the reciprocal condition number of $f(x)$.
\end{definition}

Throughout, we assume that for a function from $S_{\mu,L}$ either $\mu$ and $L$
or a lower bound of $\mu$ and an upper bound of $L$ are given.

\begin{assumption}\textnormal{\cite[p.\,59]{nesterov2004introductory}}
  \label{asp:first_order}
  A first-order method generates a sequence of points $\{x_k\}$ such that $x_k
  \in x_0 + \text{Span}\left\{ f'(x_0), \ldots, f'(x_{k-1}) \right\}, k \geq 1.$
\end{assumption}

For functions in $S_{\mu,L}$, Nesterov constructs a first-order method,
$\mathcal{N}_{\mu,L}$, and shows that it matches a lower complexity bound for
first-order methods satisfying Assumption \ref{asp:first_order} up to a constant
factor in the sense of worst-case number of
iterations. Nesterov~\cite{nesterov2004introductory} gives more details on the
optimality. Note that Assumption \ref{asp:first_order} is very mild, as most
first-order methods fall into the framework, which secures the optimality of
$\mathcal{N}_{\mu,L}$. To construct such an optimal first-order method, Nesterov
introduces an \emph{estimate sequence} and shows how it helps derive
$\mathcal{N}_{\mu,L}$ and prove its convergence rate.
\begin{definition}\textnormal{\cite[p.\,72]{nesterov2004introductory}}
  \label{def:2.2.1}
  A pair of sequences $\{\phi_k(x)\}$ and $\{\lambda_k\}$, $\lambda_k \geq 0$ is
  called an estimate sequence of $f(x)$ if $\lambda_k \to 0$ and we have
  \begin{equation}
    \label{eq:3}
    \phi_k(x) \leq ( 1 - \lambda_k) f(x) + \lambda_k \phi_0(x), \quad \forall x \in \mathbb{R}^n \text{ and } k\geq 0.
  \end{equation}
\end{definition}
\begin{lemma}\textnormal{\cite[p.\,72]{nesterov2004introductory}}
  \label{lemma:2.2.1}
  If the pair of sequences $\{\phi_k(x)\}$ and $\{\lambda_k\}$ is an estimate
  sequence of $f(x)$ and for some sequence $\{ x_k \}$ we have
  \begin{equation}
    \label{eq:2.2.2}
    f(x_k) \leq \phi_k^* \equiv \min_{x \in \mathbb{R}^n} \phi_k(x),
  \end{equation}
  then $f(x_k) - f^* \leq \lambda_k [ \phi_0(x^*) - f^* ] \to 0$, where $x^*$ is
  the optimal value of $f(x)$.
\end{lemma}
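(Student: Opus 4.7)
The plan is to chain the hypothesis \eqref{eq:2.2.2} with the defining inequality \eqref{eq:3} of an estimate sequence, evaluated at the optimum $x^*$, and then take limits.

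First I would start from the assumption $f(x_k) \leq \phi_k^* = \min_{x} \phi_k(x)$. Since the minimum is at most the value at any particular point, in particular $\phi_k^* \leq \phi_k(x^*)$. Next I would invoke the estimate-sequence property \eqref{eq:3} with $x = x^*$ to get $\phi_k(x^*) \leq (1-\lambda_k) f(x^*) + \lambda_k \phi_0(x^*) = (1-\lambda_k) f^* + \lambda_k \phi_0(x^*)$. Concatenating these inequalities gives
\begin{equation*}
f(x_k) \leq (1-\lambda_k) f^* + \lambda_k \phi_0(x^*),
\end{equation*}
and subtracting $f^*$ from both sides yields the claimed bound
\begin{equation*}
f(x_k) - f^* \leq \lambda_k \bigl[ \phi_0(x^*) - f^* \bigr].
\end{equation*}

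Finally, I would observe that the right-hand side is nonnegative (since $f(x_k) \geq f^*$ forces $\phi_0(x^*) \geq f^*$ whenever $\lambda_k > 0$, and $\lambda_k \geq 0$ by definition), and that it tends to $0$ because $\lambda_k \to 0$ by the estimate-sequence definition. This gives $f(x_k) - f^* \to 0$ as claimed. There is no real obstacle here: the lemma is essentially a bookkeeping step that motivates why constructing an estimate sequence together with iterates satisfying \eqref{eq:2.2.2} is a useful strategy; the only thing to be careful about is using the inequality \eqref{eq:3} specifically at $x = x^*$ rather than some arbitrary point, which is what allows $f(x^*) = f^*$ to appear on the right-hand side.
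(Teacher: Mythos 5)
Your proof is correct and is exactly the standard argument for this lemma (which the paper cites from Nesterov's book rather than proving): chain $f(x_k) \leq \phi_k^* \leq \phi_k(x^*)$ with the estimate-sequence inequality \eqref{eq:3} at $x = x^*$, subtract $f^*$, and let $\lambda_k \to 0$. The aside about nonnegativity of the right-hand side is harmless but unnecessary, since the conclusion follows directly from $\lambda_k \to 0$.
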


Now the question becomes, given $f(x) \in S_{\mu,L}$, how can we construct an
estimate sequence of $f(x)$ and generate a sequence $\{ x_k \}$ satisfying
\eqref{eq:2.2.2}. To construct an estimate sequence, we have the following
lemma.
\begin{lemma}\textnormal{\cite[p.\,72]{nesterov2004introductory}}
  \label{lemma:2.2.2}
  Assume the following:
  \begin{enumerate}
  \item $f \in S_{\mu,L}( \mathbb{R}^n )$,
  \item $\phi_0(x)$ is an arbitrary function on $\mathbb{R}^n$,
  \item $\{y_k\}$ is an arbitrary sequence in $\mathbb{R}^n$,
  \item $\{ \alpha_k \}: \alpha_k \in (0,1)$, $\sum_{k=0}^{\infty} \alpha_k
    = \infty$,
  \item $\lambda_0 = 1$.
  \end{enumerate}
  Then the pair of sequences $\{\phi_k(x)\}$, $\{\lambda_k\}$ recursively
  defined by
  \begin{eqnarray}
    \label{eq:4}
    \lambda_{k+1} &=& ( 1 - \alpha_k ) \lambda_k, \\
    \phi_{k+1}(x) &=& ( 1 - \alpha_k ) \phi_k(x) + \alpha_k \left[ f(y_k) 
      + \langle f'(y_k), x - y_k \rangle 
      + \frac{\mu}{2} \| x - y_k \|_2^2 \right], 
  \end{eqnarray}
  is an estimate sequence.
\end{lemma}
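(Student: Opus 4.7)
To establish that $\{\phi_k\}, \{\lambda_k\}$ form an estimate sequence I verify two things: (i) $\lambda_k \to 0$, and (ii) the majorization \eqref{eq:3} holds for every $k \geq 0$. Part (i) is immediate from hypothesis~4: the recursion $\lambda_{k+1} = (1-\alpha_k)\lambda_k$ with $\lambda_0 = 1$ gives $\lambda_k = \prod_{i=0}^{k-1}(1-\alpha_i)$, and using $\log(1-\alpha_i) \leq -\alpha_i$ for $\alpha_i \in (0,1)$ yields $\log \lambda_k \leq -\sum_{i=0}^{k-1} \alpha_i \to -\infty$, so $\lambda_k \to 0$.

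For (ii), the plan is induction on $k$. The base case $k=0$ holds with equality since $\lambda_0 = 1$. For the inductive step, start from the defining recursion
\[
\phi_{k+1}(x) = (1-\alpha_k)\phi_k(x) + \alpha_k\left[f(y_k) + \langle f'(y_k), x-y_k\rangle + \frac{\mu}{2}\|x-y_k\|_2^2\right]
\]
and apply the strong convexity bound \eqref{eq:scvx} at the point $y_k$, which reads $f(y_k) + \langle f'(y_k), x-y_k\rangle + \frac{\mu}{2}\|x-y_k\|_2^2 \leq f(x)$. This is the only place where hypothesis~1 ($f \in S_{\mu,L}$) is used, and it is the crux of the lemma: the bracketed expression in the recursion is precisely designed to coincide with the strong convexity minorant at $y_k$, so it can be replaced by $f(x)$ at the cost of a $\leq$. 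Substituting the inductive hypothesis $\phi_k(x) \leq (1-\lambda_k)f(x) + \lambda_k\phi_0(x)$ then yields
\[
\phi_{k+1}(x) \leq \bigl[(1-\alpha_k)(1-\lambda_k) + \alpha_k\bigr] f(x) + (1-\alpha_k)\lambda_k \phi_0(x).
\]
By the $\lambda$-recursion, the coefficient of $\phi_0(x)$ is $\lambda_{k+1}$ and that of $f(x)$ is $1-\lambda_{k+1}$, completing the induction.

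Nothing in the argument is technically delicate; the only conceptual obstacle is recognizing that the quadratic inside the $\phi_{k+1}$ recursion is exactly the strong convexity underestimator at $y_k$, which is what permits the replacement in the inductive step. The divergence hypothesis $\sum \alpha_k = \infty$ plays no role in the induction itself; it is used solely to drive $\lambda_k$ to zero.
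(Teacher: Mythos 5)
Your proof is correct and is essentially the standard argument from Nesterov's book, which the paper cites for this lemma rather than reproducing: induction on $k$, using strong convexity \eqref{eq:scvx} at $y_k$ to bound the bracketed quadratic by $f(x)$, with $\sum_k \alpha_k = \infty$ serving only to force $\lambda_k = \prod_{i<k}(1-\alpha_i) \to 0$. No gaps.
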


We see that Lemma \ref{lemma:2.2.2} leaves us freedom in the choice of
$\phi_0(x)$, $\{y_k\}$, and $\{\alpha_k\}$. To combine the result from Lemma
\ref{lemma:2.2.1}, we should choose a simple $\phi_0(x)$ such that $\phi_k^*$ is
easy to obtain in explicit form, and choose $\{y_k\}$ and $\{\alpha_k\}$
appropriately such that we can find $x_k$ satisfying $f(x_k) \leq \phi_k^*$ for
each $k$. The following lemma is a simplified version of Lemma $2.2.3$ of
Nesterov~\cite[p.\,69]{nesterov2004introductory}.
\begin{lemma} 
  \label{lemma:2.2.3}
  Let $\phi_0(x) = \phi_0^* + \frac{\mu}{2} \| x - v_0 \|_2^2$. Then the process
  defined in Lemma \ref{lemma:2.2.2} preserves the canonical form of functions
  $\{ \phi_k(x) \}$:
  \begin{equation}
    \label{eq:1}
    \phi_k(x) \equiv \phi_k^* + \frac{\mu}{2} \| x - v_k \|_2^2,
  \end{equation}
  where the sequences $\{v_k\}$ and $\{\phi_k^*\}$ are defined as follows:
  \begin{eqnarray}
    \label{eq:v}
    v_{k+1} &=& ( 1 - \alpha_k ) v_k + \alpha_k y_k 
    - \frac{\alpha_k}{\mu} f'(y_k), \\
    \label{eq:phi}
    \phi_{k+1}^* &=& ( 1 - \alpha_k ) \phi_k^* + \alpha_k f(y_k) 
    - \frac{\alpha_k^2}{2 \mu} \| f'(y_k) \|_2^2 \\
    \nonumber && \null + \alpha_k ( 1 - \alpha_k ) \left( 
      \frac{\mu}{2} \| y_k - v_k \|_2^2 
      + \langle f'(y_k), v_k - y_k \rangle \right).
  \end{eqnarray}
\end{lemma}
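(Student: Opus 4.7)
The plan is to proceed by induction on $k$. The base case $k=0$ holds by hypothesis, since $\phi_0(x)$ is already written in canonical form. For the inductive step, assume $\phi_k(x) = \phi_k^* + \frac{\mu}{2}\|x - v_k\|_2^2$ and substitute into the recursion from Lemma \ref{lemma:2.2.2}. Since both $\phi_k(x)$ and the bracketed term in the definition of $\phi_{k+1}$ are quadratic in $x$ with Hessian $\mu I$, the convex combination $\phi_{k+1}(x)$ has Hessian $(1-\alpha_k)\mu I + \alpha_k\mu I = \mu I$. Hence $\phi_{k+1}(x)$ is again a strongly convex quadratic of the form $\phi_{k+1}^* + \frac{\mu}{2}\|x - v_{k+1}\|_2^2$ for some unique minimizer $v_{k+1}$ and minimum value $\phi_{k+1}^*$, which establishes \eqref{eq:1} at step $k+1$.

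To identify $v_{k+1}$, I would differentiate $\phi_{k+1}(x)$ and set the gradient equal to zero. Using the inductive hypothesis,
\begin{equation*}
\phi_{k+1}'(x) = (1-\alpha_k)\mu(x - v_k) + \alpha_k f'(y_k) + \alpha_k\mu(x - y_k).
\end{equation*}
Setting this to zero and dividing by $\mu$ immediately yields $v_{k+1} = (1-\alpha_k)v_k + \alpha_k y_k - \frac{\alpha_k}{\mu}f'(y_k)$, which is exactly \eqref{eq:v}.

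To identify $\phi_{k+1}^*$, the cleanest route is to evaluate $\phi_{k+1}$ at the point $y_k$, where the bracketed term collapses to $f(y_k)$, and then subtract $\frac{\mu}{2}\|y_k - v_{k+1}\|_2^2$ using the relation $\phi_{k+1}^* = \phi_{k+1}(y_k) - \frac{\mu}{2}\|y_k - v_{k+1}\|_2^2$. This gives
\begin{equation*}
\phi_{k+1}(y_k) = (1-\alpha_k)\phi_k^* + \alpha_k f(y_k) + (1-\alpha_k)\tfrac{\mu}{2}\|y_k - v_k\|_2^2.
\end{equation*}
Using the explicit formula for $v_{k+1}$, I would write $y_k - v_{k+1} = (1-\alpha_k)(y_k - v_k) + \frac{\alpha_k}{\mu}f'(y_k)$ and expand $\|y_k - v_{k+1}\|_2^2$ into three terms.

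The main obstacle is the purely mechanical but delicate algebraic bookkeeping at this last step: after expanding the square, the coefficient of $\|y_k - v_k\|_2^2$ needs to simplify via the identity $(1-\alpha_k) - (1-\alpha_k)^2 = \alpha_k(1-\alpha_k)$, the cross term contributes the $\langle f'(y_k), v_k - y_k\rangle$ piece with the correct sign flip, and the $\|f'(y_k)\|_2^2$ term picks up the coefficient $-\frac{\alpha_k^2}{2\mu}$. Collecting these carefully reproduces \eqref{eq:phi} exactly, completing the induction.
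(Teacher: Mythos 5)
Your proposal is correct and follows the standard argument (the paper itself omits the proof, deferring to Nesterov's book, where the proof is exactly this: the Hessian computation showing the canonical form is preserved, the first-order optimality condition yielding \eqref{eq:v}, and evaluation of $\phi_{k+1}$ at $y_k$ followed by subtraction of $\frac{\mu}{2}\|y_k - v_{k+1}\|_2^2$ to obtain \eqref{eq:phi}). I verified the final expansion you outline and it does reproduce \eqref{eq:phi} exactly, so there is no gap.
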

Suppose we have $\phi_k^* \geq f(x_k)$ at the $k$-th iteration. By
\eqref{eq:scvx} we know
\begin{equation*}
  \phi_k^* \geq f(y_k) + \langle f'(y_k), x_k - y_k \rangle 
  + \frac{\mu}{2} \| x_k - y_k \|_2^2.
\end{equation*}
Plugging it into \eqref{eq:phi}, we get
\begin{eqnarray}
  \label{eq:suff_orig}
  \phi_{k+1}^* &\geq& f(y_k) - \frac{\alpha_k^2}{2 \mu} \| f'(y_k) \|^2 
  + ( 1 - \alpha_k ) \langle f'(y_k), \alpha_k ( v_k - y_k ) 
  + ( x_k - y_k ) \rangle \\
  \nonumber
  && \null + \frac{\mu ( 1 - \alpha_k )}{2} \left( 
    \alpha_k \| v_k - y_k \|_2^2 + \| x_k - y_k \|_2^2 \right).
\end{eqnarray}
Remember that $y_k$ is arbitrary. We can choose $y_k = ( x_k + \alpha_k v_k ) /
( 1 + \alpha_k )$ to eliminate the linear term associated with $f'(y_k)$ and
drop the sum of squares. Then we have
\begin{equation*}
  \phi_{k+1}^* \geq  f(y_k) - \frac{\alpha_k^2}{2 \mu} \| f'(y_k) \|_2^2.
\end{equation*}
Therefore, to make $\phi_{k+1}^* \geq f(x_{k+1})$, it is sufficient to find an
$x_{k+1}$ such that
\begin{equation*}
  f(x_{k+1}) \leq f(y_k) - \frac{\alpha_k^2}{2 \mu} \| f'(y_k) \|_2^2.
\end{equation*}
Because $f'(x)$ is Lipschitz continuous with constant $L$, by choosing $x_{k+1}
= y_k - \frac{1}{L} f'(y_k)$ we can always ensure
\begin{equation}
  \label{eq:x_k1}
  f(x_{k+1}) \leq f(y_k) - \frac{1}{2L}\|f'(y_k)\|_2^2.
\end{equation}
Comparing the two inequalities above, we see setting $\alpha_k = \sqrt{\mu/L} =
\sqrt{\rho}$ would suffice. Now we can further simplify the update scheme by
knocking out $\{v_k\}$. We have
\begin{eqnarray*}
  y_{k+1} &=& \frac{ x_{k+1} + \alpha v_{k+1} } { 1 + \alpha } = 
  \frac{ x_{k+1} + \alpha \left[ ( 1 - \alpha ) v_k 
      + \alpha y_k - \frac{\alpha}{\mu} f'(y_k) \right] }{ 1 + \alpha } \\
  &=& \frac{ x_{k+1} + \alpha \left[ ( 1 - \alpha ) \frac{(1+\alpha) y_k 
        - x_k}{\alpha}  + \alpha y_k 
      - \frac{\alpha}{\mu} f'(y_k) \right] }{ 1 + \alpha } \\
  &=& x_{k+1} + \frac{1-\alpha}{1+\alpha} ( x_{k+1} - x_{k} ).
\end{eqnarray*}
We summarize this method in Algorithm \ref{alg:nesterov_scvx_3}, which is
extremely simple. The term $\frac{1-\sqrt{\rho}}{1+\sqrt{\rho}}$ is called the
acceleration parameter.
\begin{algorithm}
  \caption{$\mathcal{N}_{\mu,L}$, Nesterov's constant step scheme,
    III \cite[p.\,81]{nesterov2004introductory}}
  \label{alg:nesterov_scvx_3}
  \begin{algorithmic}[1]
    \STATE Given $f(x) \in S_{\mu,L}$ and $x_0$, set $\rho = \mu/L$ and $y_0 =
    x_0$.
    
    \FOR{$k=0,1,\ldots$ until convergence}
    
    \STATE $x_{k+1} = y_k - \frac{1}{L} f'(y_k)$
    
    \STATE $y_{k+1} = x_{k+1} + \frac{1-\sqrt{\rho}}{1+\sqrt{\rho}} (
    x_{k+1} - x_k )$

    \ENDFOR
  \end{algorithmic}
\end{algorithm}

Let $ \phi_0^* = f(x_0)$, then Lemmas \ref{lemma:2.2.1} and \ref{lemma:2.2.2}
characterize the convergence of $\mathcal{N}_{\mu,L}$.  The following theorem is
a simplified version of Theorem 2.2.3 of
Nesterov~\cite[p.\,80]{nesterov2004introductory}:
\begin{theorem}
  \label{thm:convergence}
  $\mathcal{N}_{\mu,L}$ (Algorithm \ref{alg:nesterov_scvx_3}) generates a
  sequence $\{ x_k \}$ such that
  \begin{equation}
    \label{eq:5}
    f(x_k) - f^* \leq \left( 1 - \sqrt{\rho} \right)^k \left( f(x_0) 
      + \frac{\mu}{2} \| x_0 - x^* \|_2^2 - f^* \right) 
    \leq L ( 1 - \sqrt{\rho} )^k \| x_0 - x^* \|_2^2.
  \end{equation}
\end{theorem}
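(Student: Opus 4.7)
The plan is to verify the hypotheses of Lemma \ref{lemma:2.2.1} for the specific estimate sequence and iterates produced by Algorithm \ref{alg:nesterov_scvx_3}, then compute $\lambda_k$ and $\phi_0(x^*)$ explicitly, and finally simplify using the Lipschitz-gradient property.

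First I would record the consequences of the algorithm's parameter choices. Since $\alpha_k = \sqrt{\rho}$ for all $k$, the recursion $\lambda_{k+1} = (1-\alpha_k)\lambda_k$ with $\lambda_0 = 1$ from Lemma \ref{lemma:2.2.2} immediately gives $\lambda_k = (1-\sqrt{\rho})^k$. With the initialization $v_0 = x_0$ (which comes from $y_0 = x_0$ together with the choice $y_k = (x_k + \alpha_k v_k)/(1+\alpha_k)$) and $\phi_0^* = f(x_0)$, the canonical form \eqref{eq:1} yields $\phi_0(x^*) = f(x_0) + \frac{\mu}{2}\|x_0 - x^*\|_2^2$.

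Next I would establish $f(x_k) \leq \phi_k^*$ by induction on $k$. The base case $k=0$ holds by the choice $\phi_0^* = f(x_0)$. For the inductive step, the derivation preceding the algorithm statement already does the work: assuming $f(x_k) \leq \phi_k^*$ and substituting \eqref{eq:scvx} into the update \eqref{eq:phi} gives \eqref{eq:suff_orig}; the choice $y_k = (x_k + \alpha_k v_k)/(1+\alpha_k)$ cancels the $f'(y_k)$ linear term; the remaining sum of squares is nonnegative and can be dropped; and then $\alpha_k^2 = \rho = \mu/L$ forces the coefficient of $\|f'(y_k)\|_2^2$ to be exactly $-1/(2L)$, matching the descent inequality \eqref{eq:x_k1} satisfied by $x_{k+1} = y_k - (1/L)f'(y_k)$. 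This yields $\phi_{k+1}^* \geq f(x_{k+1})$, closing the induction. Lemma \ref{lemma:2.2.1} then delivers
\begin{equation*}
  f(x_k) - f^* \leq (1-\sqrt{\rho})^k\Bigl( f(x_0) + \tfrac{\mu}{2}\|x_0 - x^*\|_2^2 - f^* \Bigr),
\end{equation*}
which is the first inequality in \eqref{eq:5}.

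For the second inequality I would use the standard descent lemma implied by \eqref{eq:lip-grad}: $f(x_0) \leq f(x^*) + \langle f'(x^*), x_0 - x^* \rangle + \frac{L}{2}\|x_0 - x^*\|_2^2$, and $f'(x^*) = 0$ at the unconstrained minimum, so $f(x_0) - f^* \leq \frac{L}{2}\|x_0 - x^*\|_2^2$. Combined with $\mu \leq L$, the bracketed quantity is at most $L\|x_0 - x^*\|_2^2$, yielding the stated bound.

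The main obstacle, if any, is really just making the inductive step fully rigorous rather than merely motivated as in the preceding derivation; once one confirms that dropping the sum of squares in \eqref{eq:suff_orig} is justified and that the algebra $\alpha_k = \sqrt{\rho} \Rightarrow \alpha_k^2/(2\mu) = 1/(2L)$ matches \eqref{eq:x_k1} on the nose, everything else is bookkeeping.
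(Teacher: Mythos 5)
Your proof is correct and follows essentially the same route as the paper, which establishes the theorem by invoking Lemmas \ref{lemma:2.2.1} and \ref{lemma:2.2.2} together with the derivation preceding Algorithm \ref{alg:nesterov_scvx_3} (the induction $f(x_k)\leq\phi_k^*$ via the choice of $y_k$, the dropped sum of squares, and $\alpha_k^2/(2\mu)=1/(2L)$). Your explicit justification of the final inequality via the descent lemma $f(x_0)-f^*\leq\frac{L}{2}\|x_0-x^*\|_2^2$ and $\mu\leq L$ is a correct filling-in of a step the paper leaves implicit.
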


Note that Nesterov actually provides three variants in
\cite{nesterov2004introductory} and what we mentioned here is the third one. For
the other two, $\{\alpha_k\}$ is not a constant sequence but deterministic and
having $\alpha_k \to \sqrt{\rho}$ as $k \to \infty$; hence the asymptotic
convergence rate is still $1-\sqrt{\rho}$. In practice, they perform quite
similarly, while the third is the least expensive among the three variants.


\section{Accelerating Nesterov's method with adaptive $\alpha_k$}
\label{sec:nesterov-alpha}

In $\mathcal{N}_{\mu,L}$, the rate of decrease of $f(x_k)-f^*$ at the $k$-th
iteration is bounded by $1-\alpha_k$, where $\alpha_k = \sqrt{\rho}$ for all
$k$. Our modified method is based on the following idea: trying to make
$\alpha_k$ larger than $\sqrt{\rho}$ at each iteration in order to accelerate
the convergence. To see how it works, we need to revisit Nesterov's
construction, particularly the inequality \eqref{eq:suff_orig}. Given
\eqref{eq:suff_orig}, it is sufficient to find $\alpha_k$, $y_k$, and $x_{k+1}$
such that
\begin{eqnarray*}
  f(x_{k+1}) &\leq& f(y_k) - \frac{\alpha_k^2}{2 \mu} \| f'(y_k) \|_2^2 \\
  && \null + ( 1 - \alpha_k ) \langle f'(y_k), \alpha_k ( v_k - y_k ) 
  + ( x_k - y_k ) \rangle \\
  && \null + \frac{\mu ( 1 - \alpha_k )}{2} 
  \left( \alpha_k \| v_k - y_k \|_2^2 + \| x_k - y_k \|_2^2 \right)
\end{eqnarray*}
to retain \eqref{eq:2.2.2}: $f(x_{k+1}) \leq \phi_{k+1}^*$. The goal of finding
an $\alpha_k\in[0,1]$ as large as possible leads us to the following
optimization problem:
\begin{eqnarray}
  \nonumber \text{maximize} &\quad& \alpha_k \in [0,1] \\
  \label{opt:max_alpha}
  \text{subject to} && f(x_{k+1}) \leq f(y_k) 
  - \frac{\alpha_k^2}{2 \mu} \| f'(y_k) \|_2^2 \\
  \nonumber && \null + ( 1 - \alpha_k ) \langle f'(y_k), \alpha_k ( v_k - y_k ) 
  + ( x_k - y_k ) \rangle \\
  \nonumber && \null + \frac{\mu ( 1 - \alpha_k )}{2} 
  \left( \alpha_k \| v_k - y_k \|_2^2 + \| x_k - y_k \|_2^2 \right),
\end{eqnarray}
where $\alpha_k$, $y_k$, and $x_{k+1}$ are free variables, while $v_k$ is
determined at step $k$. Apparently, one optimal solution is given by $\alpha_k^*
= 1$ and $x_{k+1}^* = y_k^* = x^*$. However, $x^*$ is unknown and $y_k$ and
$x_{k+1}$ should be derived from past iterates and gradients. So we can only
expect a sub-optimal solution that is good and easy to obtain. To restrict the
optimization problem, we fix the choices of $y_k$ and $x_{k+1}$, following
Nesterov:
\begin{eqnarray}
  \label{eq:y_k}
  y_k =  \frac{ x_k + \alpha_k v_k }{ 1 + \alpha_k }, 
  \quad x_{k+1} = y_k - \frac{1}{L} f'(y_k).
\end{eqnarray}
The choice of $y_k$ eliminates the linear term associated with $f'(y_k)$ and we
have
\begin{equation*}
  \frac{\mu ( 1 - \alpha_k )}{2} \left( \alpha_k \| v_k - y_k \|_2^2 
    + \| x_k - y_k \|_2^2 \right) 
  = \frac{\mu \alpha_k ( 1 - \alpha_k )}{2 ( 1+ \alpha_k ) } \| x_k - v_k \|_2^2.
\end{equation*}
Plugging \eqref{eq:y_k} into \eqref{opt:max_alpha}, we get
\begin{eqnarray}
  \nonumber \text{maximize} &\quad& \alpha_k \in [0,1] \\
  \label{opt:max_alpha_yk}
  \text{subject to} && f(x_{k+1}) \leq f(y_k) - \frac{\alpha_k^2}{2 \mu} 
  \| f'(y_k) \|_2^2 + \frac{\mu \alpha_k ( 1 - \alpha_k )}{2 ( 1+ \alpha_k ) } 
  \| x_k - v_k \|_2^2.
\end{eqnarray}
Since evaluating the function costs time, we would be better to eliminate
$f(x_{k+1})$ and $f(y_k)$ from the above inequality.  Note that $f'(x)$ is
Lipschitz continuous and hence the choice of $x_{k+1}$ implies \eqref{eq:x_k1}.
Reinforcing the inequality \eqref{opt:max_alpha_yk} by \eqref{eq:x_k1}, we get
\begin{eqnarray}
  \label{opt:max_alpha_relax}
  \nonumber \text{maximize} &\quad& \alpha_k \\
  \text{subject to} && \left( \alpha_k^2 - \rho \right) 
  \left\| f' \left( \frac{x_k + \alpha_k v_k}{ 1 + \alpha_k} 
    \right) \right\|_2^2 \leq  \mu^2 \| x_k - v_k \|_2^2 
  \frac{\alpha_k ( 1 - \alpha_k )}{1+ \alpha_k},
\end{eqnarray}
where $\alpha_k \in [0,1]$ is implied by the constraint. Now $\alpha_k$ is the
only free variable. The constraint always holds if $\alpha_k =
\sqrt{\rho}$. Moreover, the constraint is not tight at $\alpha_k = \sqrt{\rho}$
if $x_k \neq v_k$, which is generally the case. So we can almost always expect
an $\alpha_k > \sqrt{\rho}$ at each iteration. However, the problem is still
nonlinear and solving it may lead to many function calls to the gradient
function, which is inefficient because with those gradient calls we can proceed
with the same number of iterations in $\mathcal{N}_{\mu,L}$. We try to solve
this problem approximately with the hope of getting $\alpha_k$ as large as
possible in one or two gradient calls. The idea is inspired by the following
lemma.
\begin{lemma}\label{lemma:v_k}
  Given $f(x) \in S_{\mu,L}$, let the pair of sequences $\{\phi_k(x) = \phi_k^*
  + \frac{\mu}{2} \| x - v_k \|_2^2 \}$ and $\{\lambda_k\}$ be as defined in
  Lemmas \ref{lemma:2.2.2} and \ref{lemma:2.2.3}. If for some sequence $\{ x_k
  \}$ we have $f(x_k) \leq \phi_k^*$ for all $k$, then $\lim_{k \to \infty} v_k
  = x^*$.
\end{lemma}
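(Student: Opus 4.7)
The plan is to combine the canonical form of $\phi_k$ with the defining inequality of an estimate sequence, evaluated at the optimum $x^*$, to obtain a direct quadratic bound on $\|v_k - x^*\|_2^2$ that is driven to zero by $\lambda_k$.

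First I would specialize the estimate sequence inequality \eqref{eq:3} at $x = x^*$, giving
\begin{equation*}
  \phi_k(x^*) \leq (1 - \lambda_k) f^* + \lambda_k \phi_0(x^*).
\end{equation*}
Next I would use the assumed canonical form $\phi_k(x) = \phi_k^* + \frac{\mu}{2} \|x - v_k\|_2^2$ to rewrite the left-hand side as $\phi_k^* + \frac{\mu}{2} \|x^* - v_k\|_2^2$. The hypothesis $f(x_k) \leq \phi_k^*$, combined with the trivial fact $f^* \leq f(x_k)$, then replaces $\phi_k^*$ from below by $f^*$, yielding
\begin{equation*}
  f^* + \frac{\mu}{2} \|x^* - v_k\|_2^2 \leq (1 - \lambda_k) f^* + \lambda_k \phi_0(x^*).
\end{equation*}
Rearranging gives $\frac{\mu}{2} \|x^* - v_k\|_2^2 \leq \lambda_k \bigl( \phi_0(x^*) - f^* \bigr)$, and since $\mu > 0$ and $\lambda_k \to 0$ by Definition \ref{def:2.2.1}, we conclude $v_k \to x^*$.

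I do not anticipate a genuine obstacle here: the argument is essentially a one-line manipulation once the canonical form \eqref{eq:1} and the estimate sequence inequality \eqref{eq:3} are in hand. The only subtlety worth noting is that $\phi_0(x^*) - f^* \geq 0$ automatically (otherwise \eqref{eq:3} at $x^*$ would be violated for small $k$), so the right-hand side is a meaningful nonnegative upper bound; this sign check is the only thing that could go wrong and it is immediate.
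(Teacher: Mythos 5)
Your proposal is correct and follows exactly the paper's own argument: evaluate the estimate-sequence inequality \eqref{eq:3} at $x^*$, use the canonical form \eqref{eq:1} together with $f^* \leq f(x_k) \leq \phi_k^*$ to bound $\frac{\mu}{2}\|v_k - x^*\|_2^2$ by $\lambda_k(\phi_0(x^*) - f^*)$, and let $k \to \infty$. No meaningful differences.
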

\begin{proof}
  The pair of sequences $\{\phi_k(x)\}$ and $\{ \lambda_k \}$ is an estimate
  sequence. By definition we have 
  \begin{equation*}
    \phi_k(x^*) \leq ( 1 - \lambda_k ) f(x^*)+ \lambda_k \phi_0(x^*), 
    \quad \forall k \geq 0,
  \end{equation*}
  and $\lim_{k \to \infty} \lambda_k = 0$.  Given $f(x^*) \leq f(x_k) \leq
  \phi_k^*$, we know
  \begin{eqnarray*}
    \frac{\mu}{2} \| v_k - x^* \|_2^2 = \phi_k(x^*) 
    - \phi_k^* \leq ( 1 - \lambda_k ) f( x^* ) + \lambda_k \phi_0(x^*) - f(x^*) 
    = \lambda_k( \phi_0(x^*) - f(x^*) ).
  \end{eqnarray*}
  Letting $k \to \infty$ on both sides, we have $\lim_{k \to \infty}
  \|v_k-x^*\|_2^2 = 0$ and hence $\lim_{k \to \infty} v_k = x^*$.
\end{proof}

As long as $y_k$ is chosen as in \eqref{eq:y_k}, by Lemmas \ref{lemma:2.2.1} and
\ref{lemma:v_k} we have $ \lim_{k\to\infty} y_k = x^* $ and thus the global
trend for $\| f'(y_k) \|_2$ is decreasing. So if assuming the change between two
contiguous iterations is small, we can use $\| f'(y_{k-1}) \|_2$ as an
approximate upper bound on $\| f'(y_k) \|_2$ to save the cost of evaluating
gradients since $\| f'(y_{k-1})\|_2$ is already calculated in the previous step.
The modified constraint is therefore
\begin{equation}
  \label{eq:y_k1}
  \left( \alpha_k^2 - \rho \right) \left\| f'(y_{k-1}) \right\|_2^2 
  \leq \frac{\mu^2 \alpha_k ( 1 - \alpha_k )}{1+ \alpha_k} \| x_k - v_k \|_2^2,
\end{equation}
which is equivalent to
\begin{equation*}
  \alpha_k^3 + ( 1 + D_k ) \alpha_k^2 - ( \rho + D_k ) \alpha_k - \rho \leq 0,
\end{equation*}
where $D_k = \mu^2 \| x_k - v_k \|_2^2 / \| f'(y_{k-1}) \|_2^2$. Let's consider
how to pick an $\alpha_k$ at each step. Define
\begin{equation*}
  \eta_k( \alpha ) = \alpha^3 + ( 1 + D_k ) \alpha^2 
  - ( \rho + D_k ) \alpha - \rho.
\end{equation*}
If $D_k=0$ ($x_k = v_k$), then the largest $\alpha$ satisfying $\eta_k(\alpha)
\leq 0$ is $\sqrt{\rho}$.  Assume that $D_k > 0$ and $\rho < 1$. It is easy to
verify the following properties of $\eta_k(\alpha)$ by checking its first and
second derivatives:
\begin{itemize}
\item $\eta_k(\alpha_0) < 0$, where $\alpha_0 = \sqrt{\rho}$,
\item $\eta_k(\alpha)$ has exactly one positive local minimum, denoted by
  $\beta_k$,
\item $\eta_k(\alpha)$ has exactly one positive root, denoted by $\gamma_k$.
\end{itemize}
\begin{figure}
  \centering
  \includegraphics[width=0.43\textwidth, clip, trim=1.6cm 0.6cm 1.7cm
  1.3cm]{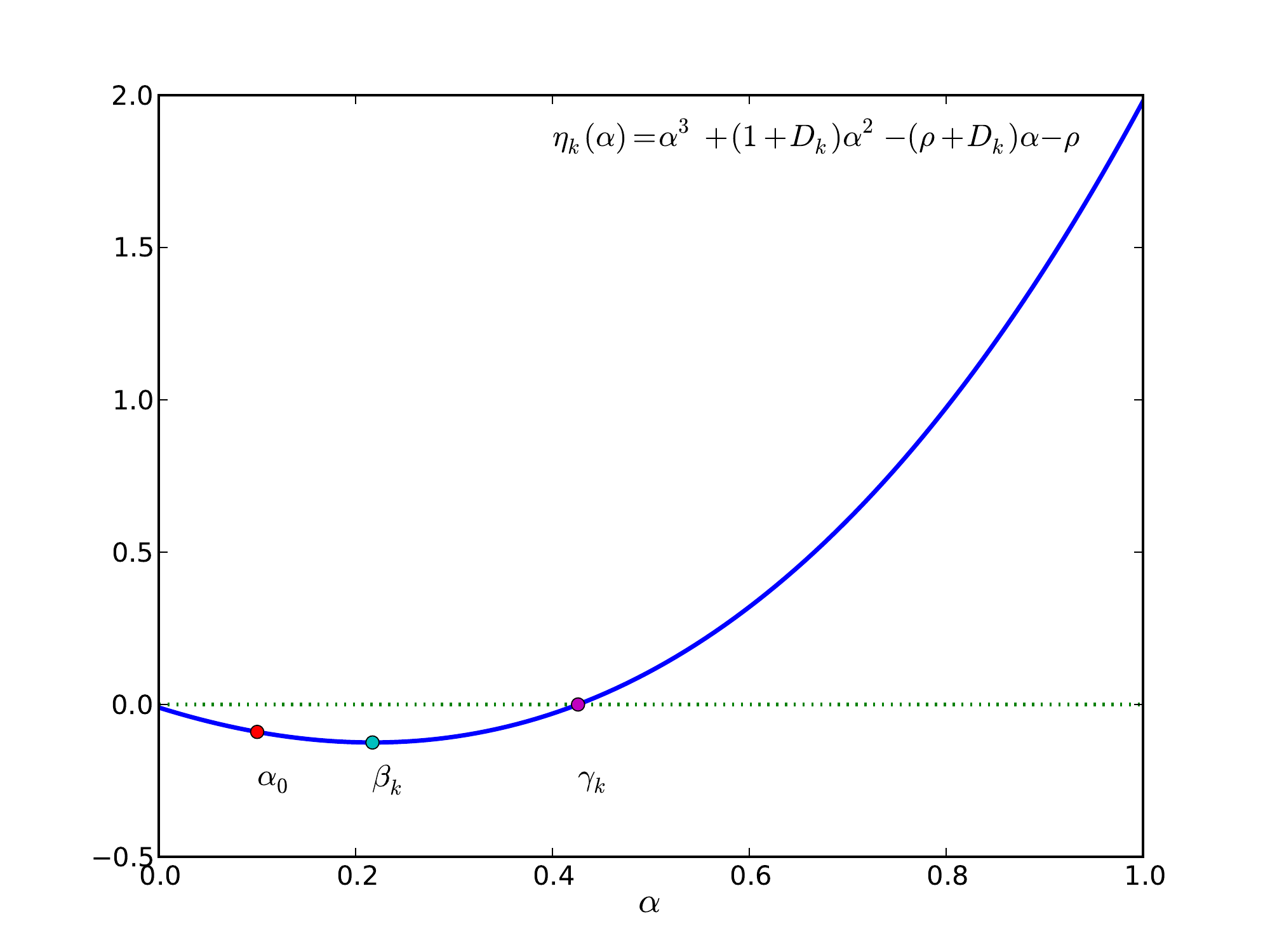}
  \caption{Choosing $\alpha_k$: 1) $\alpha_k = \alpha_0 = \sqrt{\rho}$, we
    always have $\eta_k(\alpha_0) \leq 0$, 2) $\alpha_k = \gamma_k$, the
    positive root of $\eta_k(\alpha)$, is an aggressive choice because we don't
    always have $\|f'(y_{k-1})\|_2 \geq \|f'(y_k)\|_2$, 3) $\alpha_k = \beta_k$,
    the local minimum of $\eta_k(\alpha)$, is a generally safe choice if
    $\beta_k > \alpha_0$.}
  \label{fig:alpha_k}
\end{figure}
Figure \ref{fig:alpha_k} shows a typical plot of $\eta_k(\alpha)$ with
$\alpha_0$, $\beta_k$, and $\gamma_k$. Note that $\beta_k$ is not necessarily
larger than $\alpha_0$. Choosing $\alpha_k = \alpha_0$ always leads to a valid
estimate sequence that guarantees convergence. Given $\alpha_0$ as our fallback
choice, we try to be more aggressive. $\alpha_k = \gamma_k$ is apparently the
most aggressive choice. However, if we choose $\alpha_k = \gamma_k$,
\eqref{opt:max_alpha_relax} may break frequently because $\| f'(y_{k-1})\|_2$ is
not always an upper bound on $\| f'(y_k) \|_2$.  If $\beta_k > \alpha_0$,
$\alpha_k = \beta_k$ may be a safe choice that is more robust to the violation
of $\|f'(y_{k-1})\|_2 \geq \|f'(y_k)\|_2$. Based on these observations, we
propose four heuristics (from conservative to aggressive) to pick an $\alpha_k$
and compare their performance later in section \ref{sec:numer-exper}. They are
as follows:
\begin{enumerate}
\item $\alpha_k = \max( \alpha_0, \beta_k )$,
\item $\alpha_k = \frac{1}{2}( \alpha_0 + \gamma_k )$,
\item $\alpha_k = \frac{1}{2}( \max( \alpha_0, \beta_k ) + \gamma_k )$.
\item $\alpha_k = \gamma_k$.
\end{enumerate}
As mentioned before, having an $\alpha_k$ satisfying constraint \eqref{eq:y_k1}
doesn't imply that $\alpha_k$ is feasible in \eqref{opt:max_alpha_relax}. If our
guess doesn't meet the constraint, we fall back to Nesterov's choice $\alpha_k =
\sqrt{\rho}$ without making extra effort in searching for an $\alpha_k >
\sqrt{\rho}$. Therefore, the modified method calls the gradient function at most
twice per iteration and has at least the same rate of convergence as
$\mathcal{N}_{\mu,L}$ in terms of number of iterations.

We summarize our modified method in Algorithm \ref{alg:nesterov_scvx_alpha} and
refer to it as $\mathcal{N}_{\mu,L}^\alpha$. To differentiate the four
heuristics we proposed to pick an $\alpha_k$, we call the corresponding variants
$\mathcal{N}_{\mu,L}^{\alpha,1}$, $\mathcal{N}_{\mu,L}^{\alpha,2}$,
$\mathcal{N}_{\mu,L}^{\alpha,3}$, and $\mathcal{N}_{\mu,L}^{\alpha,4}$,
respectively.
\begin{algorithm}
  \caption{$\mathcal{N}_{\mu,L}^\alpha$, Nesterov's constant step scheme with
    adaptive $\alpha_k$}
  \label{alg:nesterov_scvx_alpha}
  \begin{algorithmic}[1]

    \STATE Given $f(x) \in S_{\mu,L}(\mathbb{R}^n)$ and $x_0$, set $v_0 = y_0 =
    x_0$ and $\alpha_0 = \sqrt{\rho} = \sqrt{\mu/L}$.

    \STATE Compute $x_1 = y_0 - \frac{1}{L} f'(y_0)$.

    \FOR{$k=1,2,\ldots$ until convergence}
    
    \STATE Compute $v_{k} = ( 1 - \alpha_{k-1} ) v_{k-1} + \alpha_{k-1} y_{k-1} -
    \frac{\alpha_{k-1}}{\mu} f'(y_{k-1})$.

    \STATE Let $D_k = \mu^2 \| x_k - v_k \|_2^2 / \| f'(y_{k-1}) \|_2^2$. Choose
    an $\tilde{\alpha}_k \geq \sqrt{\rho}$ such that
    \begin{equation*}
      \tilde{\alpha}_k^3 + ( 1 + D_k ) \tilde{\alpha}_k^2 
      - ( \rho + D_k ) \tilde{\alpha}_k - \rho \leq 0.
    \end{equation*}

    \STATE Compute $\tilde{y}_k = ( x_k + \tilde{\alpha}_k v_k ) / ( 1 +
    \tilde{\alpha}_k )$ and $\tilde{x}_{k+1} = \tilde{y}_k - \frac{1}{L}
    f'(\tilde{y}_k)$.
    
    \STATE Validate whether we have $f(\tilde{x}_{k+1}) \leq \phi_{k+1}^*$ by
    verifying a more stringent inequality
    \begin{equation*}
      (\tilde{\alpha}_k^2 - \rho) \| f'( \tilde{y}_k ) \|_2^2 \leq \mu^2 \| x_k - v_k \|_2^2 \frac{\tilde{\alpha}_k ( 1 - \tilde{\alpha}_k )}{1 + \tilde{\alpha}_k}.
    \end{equation*}
    
    \STATE If valid, let $\alpha_k = \tilde{\alpha}_k$, $y_k = \tilde{y_k}$, and
    $x_{k+1}
    = \tilde{x}_{k+1}$.\\
    Otherwise, let $\alpha_k = \sqrt{\rho}$, $y_k = ( x_k + \alpha_k v_k ) / ( 1
    + \alpha_k )$, and $x_{k+1} = y_k - \frac{1}{L} f'(y_k)$.

    \ENDFOR
  \end{algorithmic}
\end{algorithm}


\section{Related work}
\label{sec:related-work}

In this section, we discuss related work on accelerating Nesterov's methods
$\mathcal{N}_L$ and $\mathcal{N}_{\mu,L}$. In both $\mathcal{N}_L$ and
$\mathcal{N}_{\mu,L}$, the global Lipschitz constant $L$ is assumed to be
known. However, $L$ might be difficult to get, and even if $L$ is given, local
Lipschitz constants may be much smaller than $L$ such that the step size
$\frac{1}{L}$ becomes too conservative. A widely adopted solution is
backtracking linesearch, where the step size is adaptively chosen. Tseng
\cite{tseng2008accelerated} presented a sufficient condition on the step size to
preserve the convergence rate of $\mathcal{N}_L$. Becker et al.\ \cite[\S
5.3]{becker2010templates} proposed an alternative condition that is numerically
more stable to verify, and they also discussed implementation issues. Gonzaga
and Karas \cite{gonzaga2008optimal} developed a linesearch scheme that preserves
the convergence rate of $\mathcal{N}_{\mu,L}$ when only $\mu$ is
given. Linesearch schemes generally do not need explicit knowledge of $L$, but a
single search may require evaluating the objective function for several
times. Hence, even if $L$ is provided, it is still problem-dependent whether we
should use the constant step $\mathcal{N}_L$/$\mathcal{N}_{\mu,L}$ or a
backtracking linesearch.

On strongly convex functions with Lipschitz gradient, $\mathcal{N}_L$ may
converge at a rate $\mathcal{O}(1/k^2)$ while even the steepest gradient descent
method has linear convergence. Note that the optimal method
$\mathcal{N}_{\mu,L}$ takes the same form as $\mathcal{N}_L$. The only
difference is the acceleration parameter. $\mathcal{N}_L$ increases the
acceleration parameter gradually. $\mathcal{N}_{\mu,L}$, given the global
convexity parameter $\mu$, sets the acceleration parameter to a constant that
guarantees linear convergence at an optimal rate. However, $\mu$ is not always
known. Nesterov \cite{nesterov2007gradient} proposed a practical approach to
discover strong convexity: restarting $\mathcal{N}_L$ after a certain number of
iterations. Theoretically, whether we should restart $\mathcal{N}_L$ depends on
the local condition number. Empirically, even with sub-optimal choices, linear
convergence rate can be achieved. See Becker et al.~\cite[\S
5.6]{becker2010templates} for more details. Gonzaga and Karas
\cite{gonzaga2008optimal} developed an adaptive procedure to estimate $\mu$ at
the cost of function evaluations.

In this work, we assume that both $\mu$ and $L$ are given and only the gradient
function is used to maintain minimal cost per iteration. We save gradient calls
based on the global trend of $\| f'(y_k) \|_2$. We argue that there are many
cases where $\mu$ and $L$ are easy to obtain. $L$ can be easily estimated for a
quadratic function, or derived from a smooth approximation of a non-smooth
function \cite{nesterov2005smooth}, and $\mu$ can be derived from a quadratic
regularization term, e.g., $\frac{\mu}{2}\|x-c\|^2$, or by adding a quadratic
term to the objective manually and then performing sequential updates.


\section{Numerical experiments}
\label{sec:numer-exper}

We compare the four variants of $\mathcal{N}_{\mu,L}^\alpha$ with
$\mathcal{N}_L$ and $\mathcal{N}_{\mu,L}$. We implement
$\mathcal{N}_{\mu,L}^\alpha$ in MATLAB. The source code is available for
download\footnote{\url{http://www.stanford.edu/~mengxr/pub/acc_nesterov.html}}
together with code that can be used to reproduce our results. $\mathcal{N}_L$
doesn't take $\mu$ as input and converges with rate $\mathcal{O}(1/k^2)$. To
recover linear convergence, as suggested by Nesterov~\cite{nesterov2007gradient}
and Becker et al.~\cite{becker2010templates}, we restart $\mathcal{N}_L$ after a
certain number of iterations. The optimal number of iterations between restarts
is problem-dependent. For each test, we restart $\mathcal{N}_L$ every 10, 100,
and 1000 iterations respectively, compare the convergence rates with
$\mathcal{N}_L$ without restart, and present the best result. The experiments
were performed on a laptop that has two Intel Core Duo CPU cores at clock rate
2.0GHz and 4GB RAM. Only one core was used to remove the effect of
multi-threading. We compare the convergence based on number of gradient calls
and on running times, rather than on number of iterations, because Nesterov's
methods call the gradient function exactly once per iteration, but
$\mathcal{N}_{\mu,L}^\alpha$ may call the gradient function twice per
iteration. The running times were measured in wall-clock times.

\subsection{Ridge regression}

\begin{figure}
  \centering
  \includegraphics[width=0.45\textwidth, clip, trim=1.5cm 7cm 2.5cm
7cm]{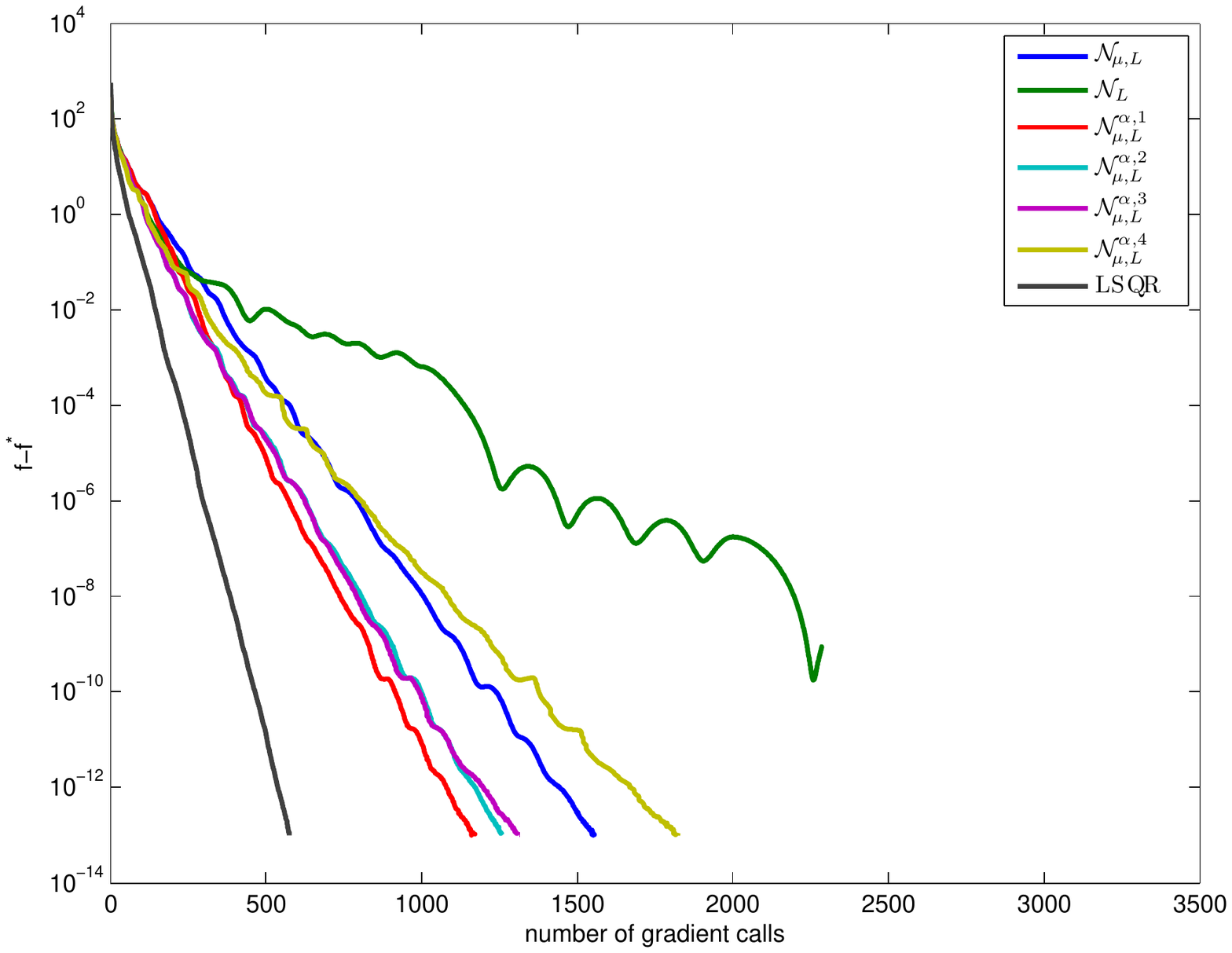}
  \includegraphics[width=0.45\textwidth, clip, trim=1.5cm 7cm 2.5cm
7cm]{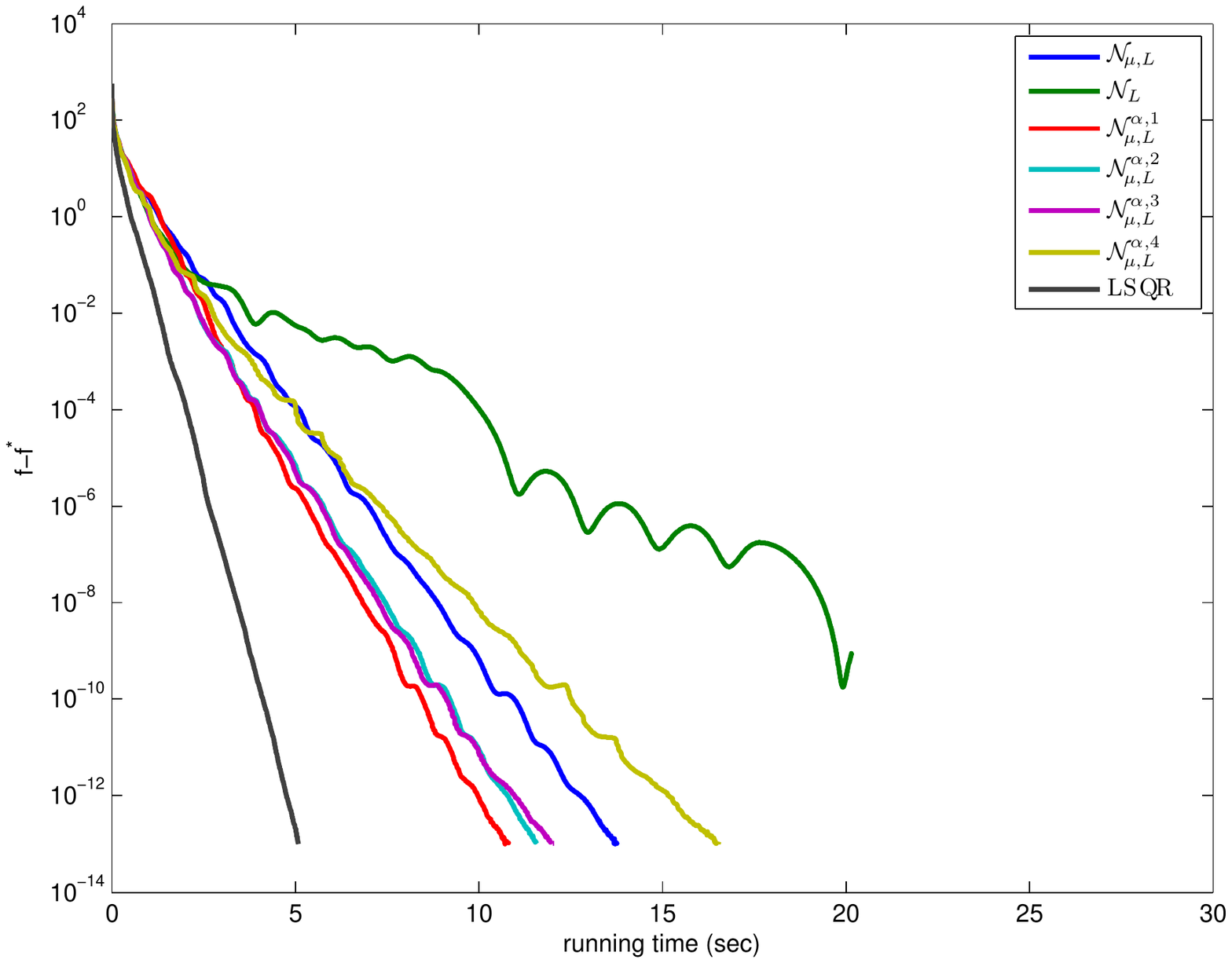}
  \includegraphics[width=0.45\textwidth, clip, trim=1.5cm 7cm 2.5cm
7cm]{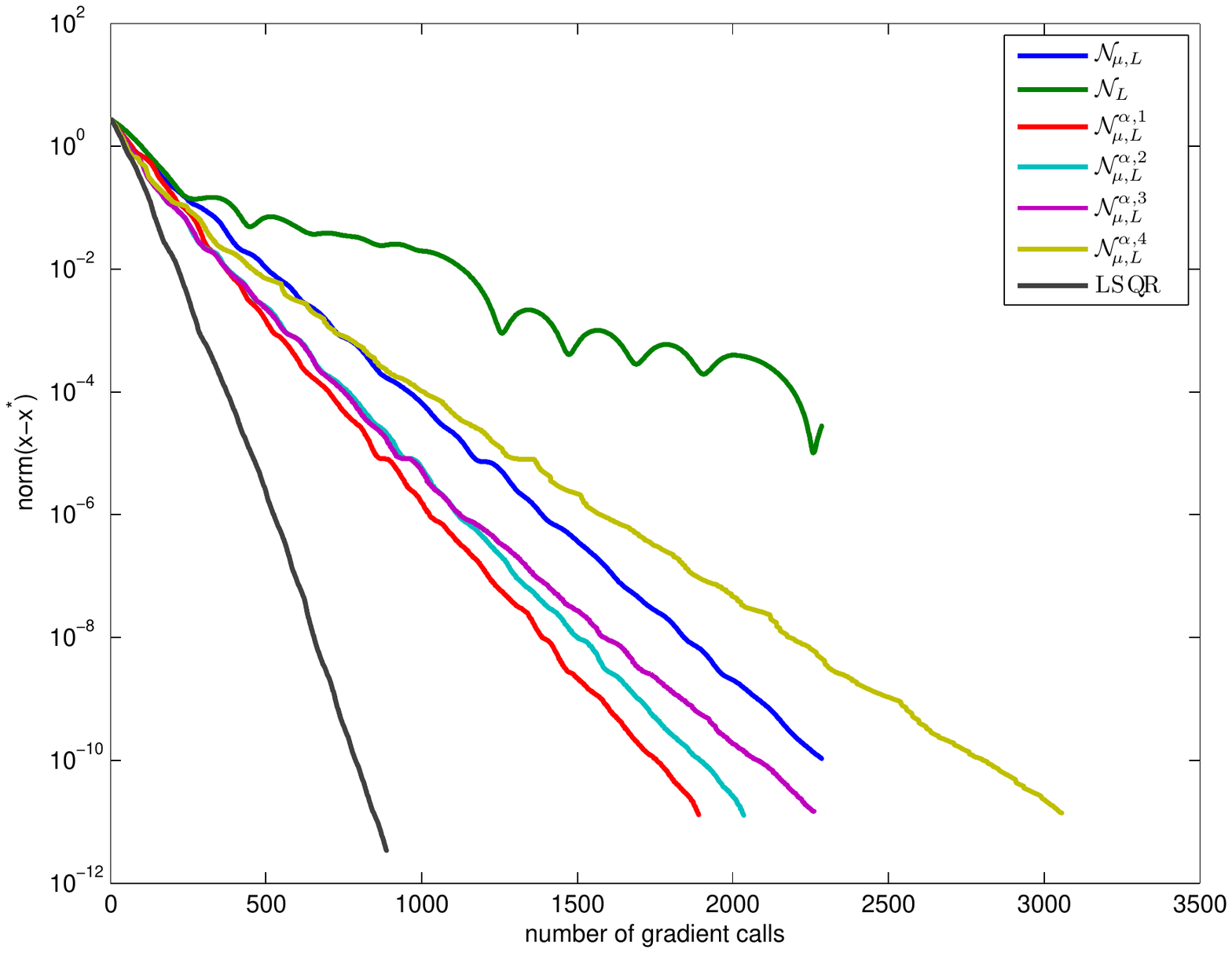}  
  \includegraphics[width=0.45\textwidth, clip, trim=1.5cm 7cm 2.5cm
7cm]{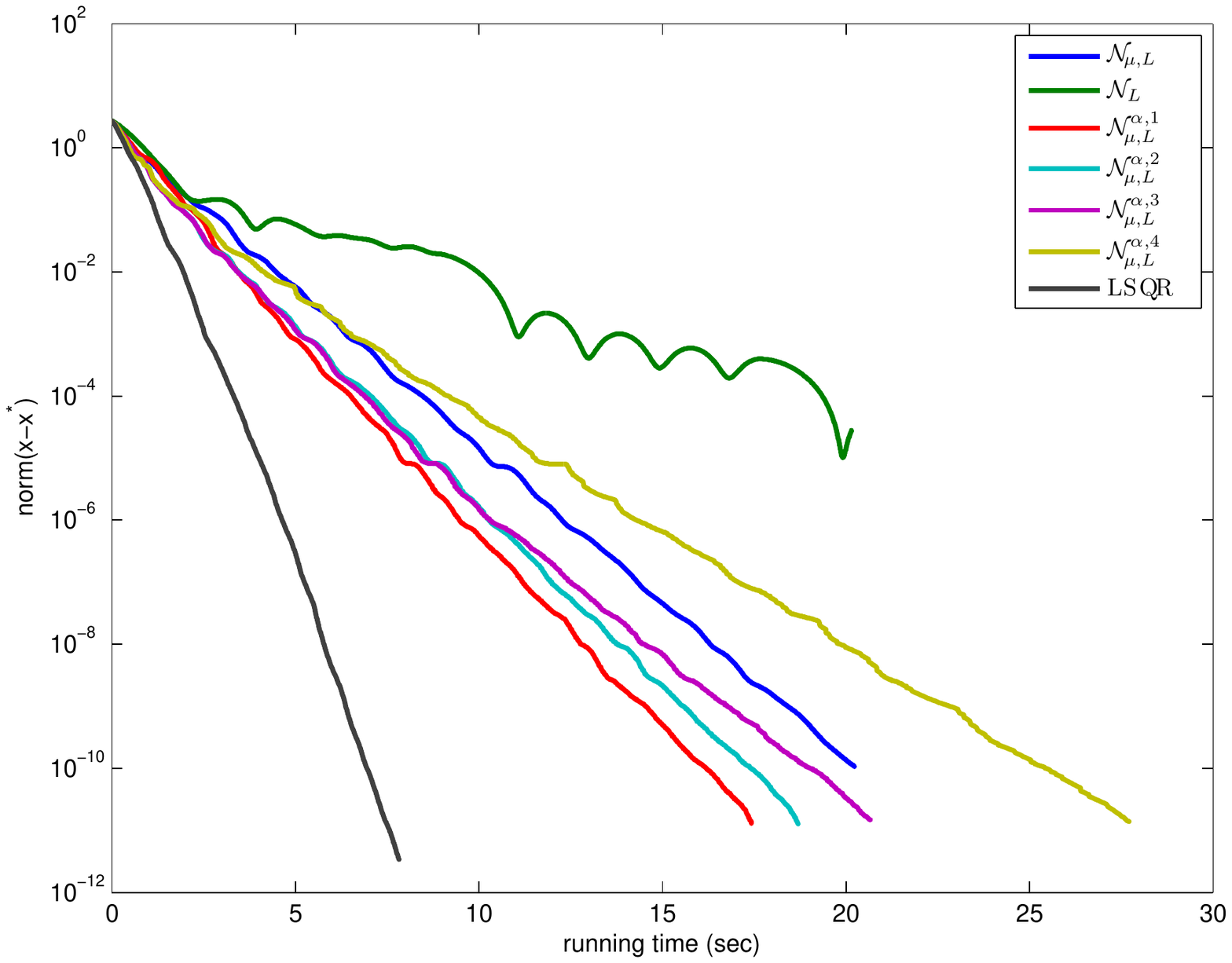}  
\caption{On a ridge regression problem. Top left: $f-f^*$ vs.\ number of
  gradient calls. Top right: $f-f^*$ vs.\ running time. Bottom left:
  $\|x-x^*\|_2$ vs.\ number of gradient calls. Bottom right: $\|x-x^*\|_2$ vs.\
  running time. In terms of convergence speed, we have $\text{LSQR} >
  \mathcal{N}_{\mu,L}^{\alpha,1} > \mathcal{N}_{\mu,L}^{\alpha,2} >
  \mathcal{N}_{\mu,L}^{\alpha,3} > \mathcal{N}_{\mu,L} >
  \mathcal{N}_{\mu,L}^{\alpha,4} >
  \mathcal{N}_L$. $\mathcal{N}_{\mu,L}^{\alpha,4}$ is too aggressive and should
  be used with caution.}
  \label{fig:ridge}
\end{figure}

Our first test is on a ridge regression problem, i.e., a linear least squares
problem with Tikhonov regularization:
\begin{equation*}
  \text{minimize} \quad f(x) = \frac{1}{2} \| A x - b \|_2^2 
  + \frac{\lambda}{2} \| x \|_2^2,
\end{equation*}
where $A \in \mathbb{R}^{m \times n}$ is the measurement matrix, $b \in
\mathbb{R}^m$ is the response vector, and $\lambda > 0$ is the ridge
parameter. The unique solution is given by $x^* = ( A^T A + \lambda I )^{-1} A^T
b$.

$f(x)$ is a positive definite quadratic function, the simplest function type in
the $S_{\mu,L}$ family. $f(x)$ has Lipschitz gradient with constant $L = \| A
\|_2^2 + \lambda$ and strong convexity with parameter $\mu = \lambda$. It is
easy to show that $\mathcal{N}_{\mu,L}$ automatically achieves better
convergence rate on positive definite quadratic functions by exploring the
eigenspace. We have
\begin{equation*} \| x_k - x^* \|_2 \leq C_0 \left( 1 - \sqrt{\rho} \right)^k \|
x_0 - x^* \|_2
\end{equation*} for some constant $C_0 > 0$ and hence
\begin{equation*} f(x_k) - f^* \leq \frac{L}{2} \| x_k - x^* \|_2^2 \leq
\frac{C_0^2 L}{2} ( 1 - \sqrt{\rho} )^{2k} \| x_0 - x^* \|_2^2.
\end{equation*}
We omit the proof because it is purely mechanic work. Another important fact
about positive definite quadratic functions is that there exist algorithms that
can achieve the lower complexity bound derived by Nesterov~\cite[p.\
68]{nesterov2004introductory}, e.g., the conjugate gradient (CG) method. We
refer readers to Luenberger~\cite{luenberger1973introduction} for a detailed
analysis of CG's convergence rate. For least squares problems, LSQR
\cite{paige1982lsqr} is preferable because LSQR is equivalent to applying CG to
the normal equation in exact arithmetic but numerically more stable. The purpose
of this test is not to compete with LSQR, which is specifically designed to
solve least squares problems, but to treat LSQR as an ideal method and see how
$\mathcal{N}_{\mu,L}^\alpha$ can reduce the gap between $\mathcal{N}_{\mu,L}$
and the ideal method on the simplest function family in $S_{\mu,L}$.

We choose $m = 1200$, $n = 2000$, and $\lambda = 1.0$.  We generate $A$ from $U
\Sigma V^T$ where $U \in \mathbb{R}^{m \times m}$ and $V \in \mathbb{R}^{n
  \times m}$ are orthonormal matrices chosen at random, $\Sigma \in
\mathbb{R}^{m \times m}$ is a diagonal matrix with diagonal elements linearly
spaced between and including $100$ and $1$. $b = \text{randn}(m,1)$ is a random
vector whose entries are i.i.d.\ samples drawn from the standard normal
distribution. Although the exact value is known, $\|A\|_2^2$ is estimated by
applying the power method to $AA^T$. We have $\mu = 1$ and $L \approx
10001$. Figure \ref{fig:ridge} shows the comparison results. LSQR leads as
expected. $\mathcal{N}_{\mu,L}^{\alpha,1}$, $\mathcal{N}_{\mu,L}^{\alpha,2}$,
and $\mathcal{N}_{\mu,L}^{\alpha,3}$ form the second group with
$\mathcal{N}_{\mu,L}^{\alpha,1}$ having a slight
edge. $\mathcal{N}_{\mu,L}^{\alpha,4}$ falls behind all other variants of
$\mathcal{N}_{\mu,L}^\alpha$ and $\mathcal{N}_{\mu,L}$ because it is too
aggressive on choosing an $\alpha_k$ and falls back to $\alpha_k = \sqrt{\rho}$
frequently. Hence $\mathcal{N}_{\mu,L}^{\alpha,4}$ should be used with
caution. $\mathcal{N}_L$, even with restart, is the slowest among competitive
methods. We see $\mathcal{N}_{\mu,L}^{\alpha,1}$ approximately reduces the gap
between $\mathcal{N}_{\mu,L}$ and LSQR by a factor of $30\%$ in terms of number
of gradient calls.

\subsection*{Anisotropic bowl}

\begin{figure} \centering
  \includegraphics[width=0.45\textwidth, clip, trim=1.5cm 7cm 2.5cm
7cm]{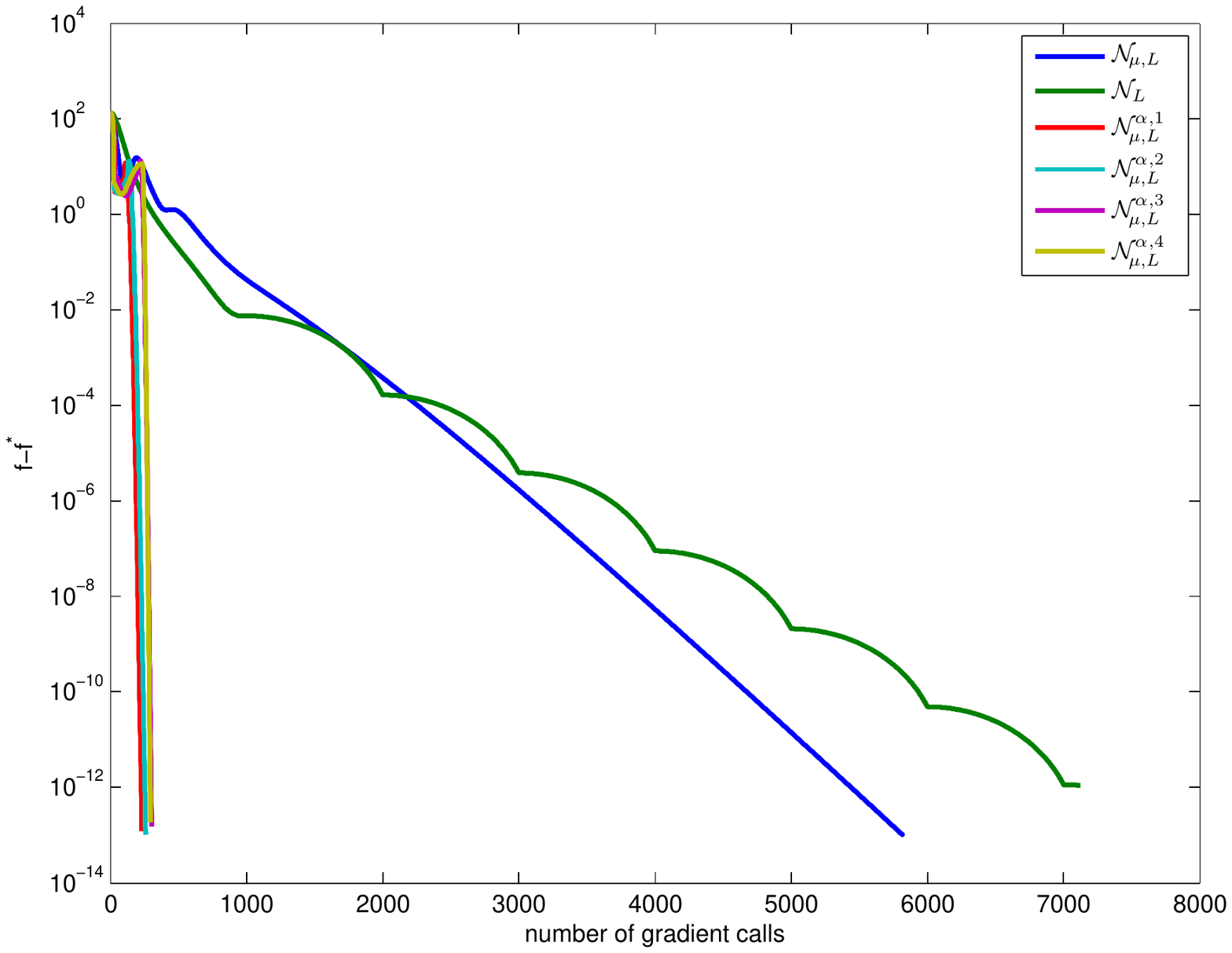}
  \includegraphics[width=0.45\textwidth, clip, trim=1.5cm 7cm 2.5cm
7cm]{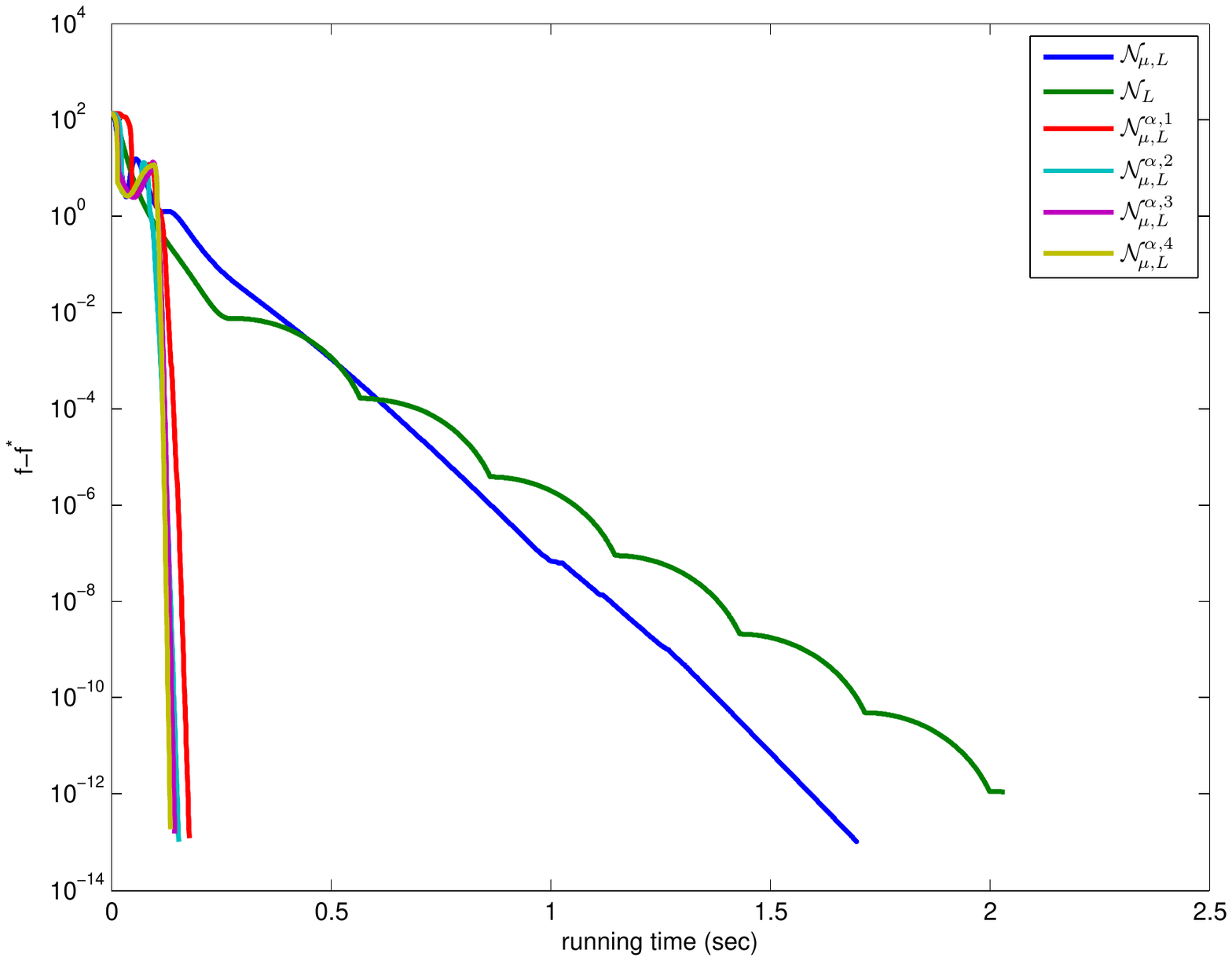}
  \includegraphics[width=0.45\textwidth, clip, trim=1.5cm 7cm 2.5cm
7cm]{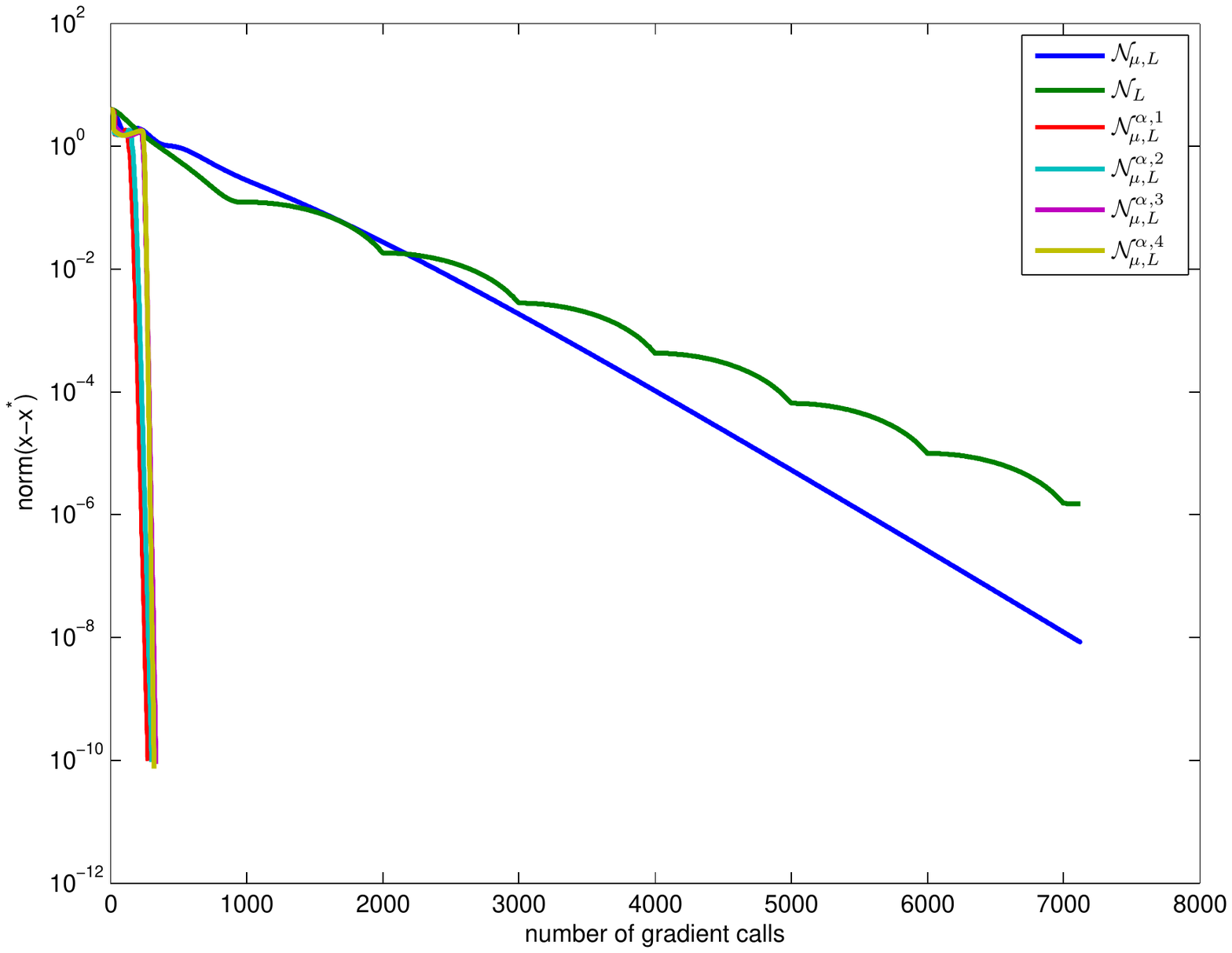}  
  \includegraphics[width=0.45\textwidth, clip, trim=1.5cm 7cm 2.5cm
7cm]{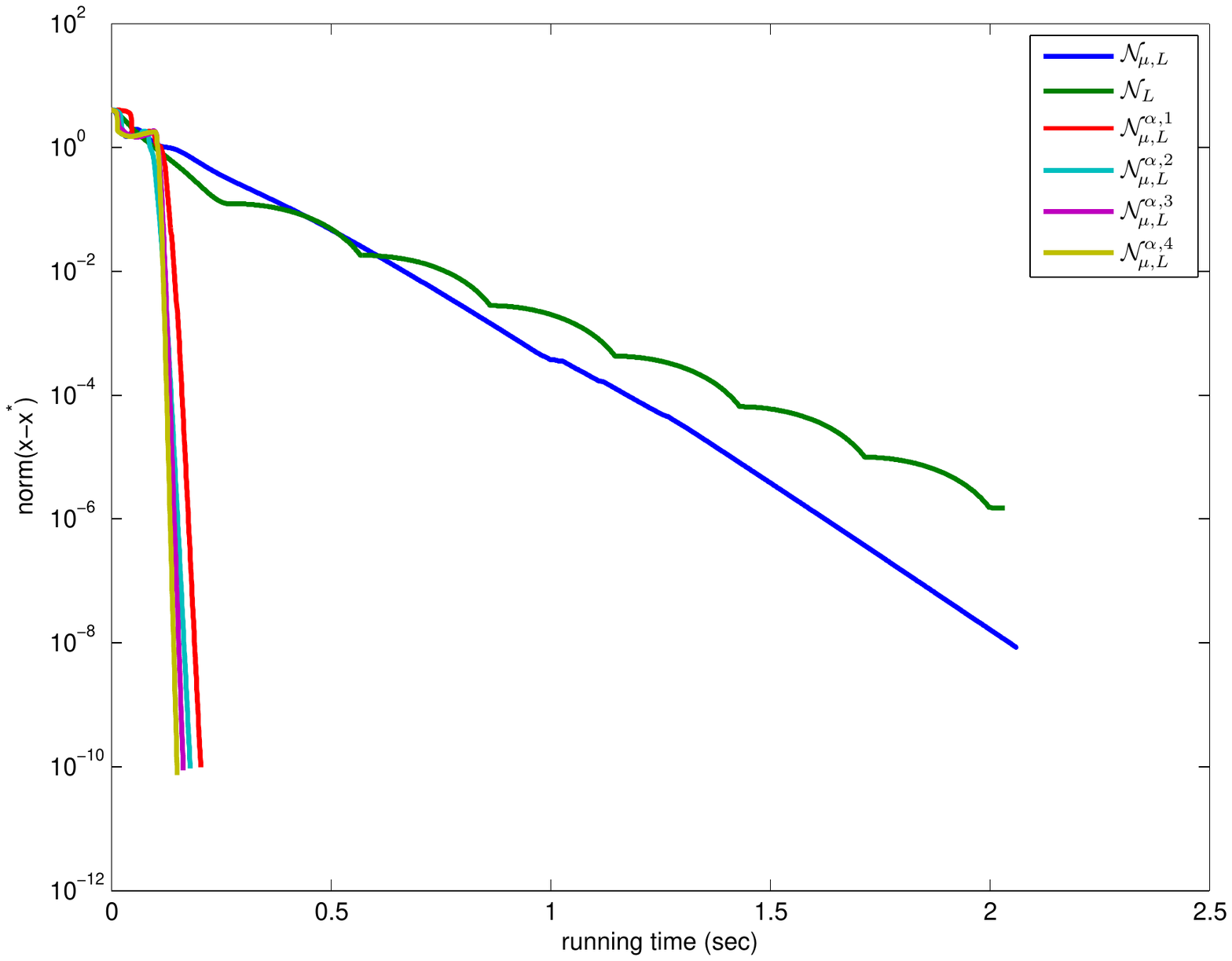} 
\caption{On an anisotropic bowl. Top left: $f-f^*$ vs.\ number of gradient
  calls. Top right: $f-f^*$ vs.\ running time. Bottom left: $\|x-x^*\|_2$ vs.\
  number of gradient calls. Bottom right: $\|x-x^*\|_2$ vs.\ running time. All
  the variants of $\mathcal{N}_{\mu,L}^\alpha$ converge significantly faster
  than $\mathcal{N}_{\mu,L}$ or $\mathcal{N}_L$.}
  \label{fig:ani_bowl}
\end{figure} 

The second test is on a bowl-shaped function, which is anisotropic along
different directions:
\begin{eqnarray*} \text{minimize} &\quad& f(x) = \sum_{i=1}^n i \cdot x_{(i)}^4
+ \frac{1}{2} \|x\|_2^2 \\ \text{subject to} && \|x\|_2 \leq \tau,
\end{eqnarray*} 
where we use $x_{(i)}$ to indicate the $i$-th element of $x$. We put a
constraint to make $f(x)$ have a Lipschitz continuous gradient over the feasible
region. If $x_k$ falls outside the feasible region, we project it back to the
nearest feasible point. By doing so, we know the function value will be
decreased, so the convergence result still holds. We use this example to test
the performance of $\mathcal{N}_{\mu,L}^\alpha$ and competitive methods when the
gradient has local Lipschitz constants that are much smaller than the global
one.

We choose $n = 500$, $\tau = 4$, and $x_0 = \frac{\tau}{\sqrt{n}}
\mathbf{1}$. With these choices, we have $L = 12 n \tau^2+1 = 96001$ and $\mu =
1$. Figure \ref{fig:ani_bowl} draws the convergence results. We see that all the
variants of $\mathcal{N}_{\mu,L}^\alpha$ converge significantly faster than
$\mathcal{N}_{\mu,L}$ or $\mathcal{N}_L$. For example, to reach $f(x_k) - f^* <
10^{-12}$, variants of $\mathcal{N}_{\mu,L}^{\alpha,1}$ take about $200$
gradient calls, $\mathcal{N}_{\mu,L}$ takes $5500$ gradient calls, and
$\mathcal{N}_L$ takes $7000$ gradient calls. The differences among the four
variants of $\mathcal{N}_{\mu,L}^\alpha$ are really small.

\begin{figure} \centering
  \includegraphics[width=0.45\textwidth, clip, trim=1.5cm 7cm 2.5cm
  7cm]{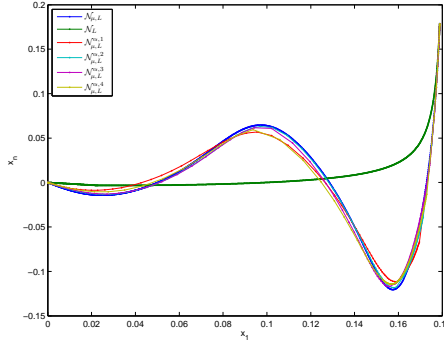}
  \caption{Projected trajectory of $\{x_k\}$ on the plane spanned by $x_{(1)}$
    and $x_{(n)}$. $\mathcal{N}_{\mu,L}$ and $\mathcal{N}_{\mu,L}^\alpha$ are
    almost following the same path, though $\mathcal{N}_{\mu,L}$ makes little
    progress per step, while $\mathcal{N}_{\mu,L}^\alpha$ has many large
    steps. We say $\mathcal{N}_{\mu,L}^\alpha$ accelerates $\mathcal{N}_{\mu,L}$
    in this sense.}
  \label{fig:traj}
\end{figure}

To investigate further, we plot the projected trajectory of $\{x_k\}$ on the
plane spanned by $x_{(1)}$ and $x_{(n)}$ for each method. In Figure
\ref{fig:traj} we see that the point sequences generated by
$\mathcal{N}_{\mu,L}^{\alpha}$ and $\mathcal{N}_{\mu,L}$ are almost following
the same path. However, $\mathcal{N}_{\mu,L}$ makes very little progress per
step, while $\mathcal{N}_{\mu,L}^\alpha$ jumps along the path. In this sense we
say $\mathcal{N}_{\mu,L}^\alpha$ is indeed accelerating $\mathcal{N}_{\mu,L}$.

\subsection*{Smooth-BPDN} 

The third test is on a smoothed and strongly convex version of the basis pursuit
denoising (BPDN) problem of Chen et al.~\cite{chen2001atomic}:
\begin{equation*} 
  \text{minimize} \quad f(x) = \frac{1}{2} \| A x - b \|_2^2 +
  \lambda \| x \|_{\ell_1,\tau} + \frac{\rho}{2} \|x\|_2^2,
\end{equation*} 
where $\| \cdot \|_{\ell_1,\tau}$ is given by
\begin{equation*} \| x \|_{\ell_1,\tau} =
  \begin{cases} |x| - \frac{\tau}{2} & \text{if } |x| \geq \tau \\
    \frac{1}{2 \tau} x^2 & \text{if } |x| < \tau
  \end{cases}
\end{equation*} 
if $x$ is a scalar and $\| x \|_{\ell_1,\tau} = \sum_{i=1}^n \| x_{(i)}
\|_{\ell_1,\tau}$ if $x$ is a vector in $\mathbb{R}^n$. $\| \cdot
\|_{\ell_1,\tau}$ is a smoothed version of the $\ell_1$ norm, also recognized as
the Huber penalty function with half-width $\tau$. $\lambda > 0$ and $\rho > 0$
are parameters controlling the penalty terms. The quadratic term $\frac{\rho}{2}
\| x \|_2^2$ makes the function strongly convex. $f(x)$ has Lipschitz gradient
with constant $L = \|A\|_2^2 + \frac{\lambda}{\tau} + \rho$ and strong convexity
with parameter $\mu = \rho$.

We set $A = \frac{1}{\sqrt{n}} \cdot \text{randn}(m,n)$, where $m = 800$ and $n
= 2000$, $\lambda = 0.05$, $\tau = 0.0001$, and $\mu = 0.05$. The true signal is
a random sparse vector with $40$ nonzeros. $b = A x^* + e$, where $e = 0.01
\frac{\|b\|_2}{\sqrt{m}} \cdot \text{randn}(m,1)$ is a Gaussian noise.
$\|A\|_2^2$ is estimated by applying the power method to $A A^T$. The value is
around $1.63$. Hence we have
\begin{equation*}
  L = \|A\|_2^2 + \frac{\lambda}{\tau} + \rho \approx 502.7  \text{ and } \mu = 0.05.
\end{equation*}
There is no analytic solution for this problem. We apply $\mathcal{N}_{\mu,L}$
to the problem with a small tolerance on the gradient norm and use the
approximate solution returned by $\mathcal{N}_{\mu,L}$ as the optimal
solution. Figure \ref{fig:bpdn} presents the results. All variants of
$\mathcal{N}_{\mu,L}^\alpha$ run faster than $\mathcal{N}_{\mu,L}$ or
$\mathcal{N}_L$. It takes about $750$ gradient calls for
$\mathcal{N}_{\mu,L}^\alpha$ to reach $f(x_k) - f^* < 10^{-12}$, $1300$ for
$\mathcal{N}_{\mu,L}$, and $1900$ for $\mathcal{N}_L$. The corresponding running
times are around $5$, $7.5$, and $11.5$ seconds, respectively.
$\mathcal{N}_{\mu,L}^{\alpha,4}$ is slow at the beginning but becomes the
fastest method at the end. However, the differences among the four variants of
$\mathcal{N}_{\mu,L}^\alpha$ are not big.

\begin{figure} \centering
  \includegraphics[width=0.45\textwidth, clip, trim=1.5cm 7cm 2.5cm
7cm]{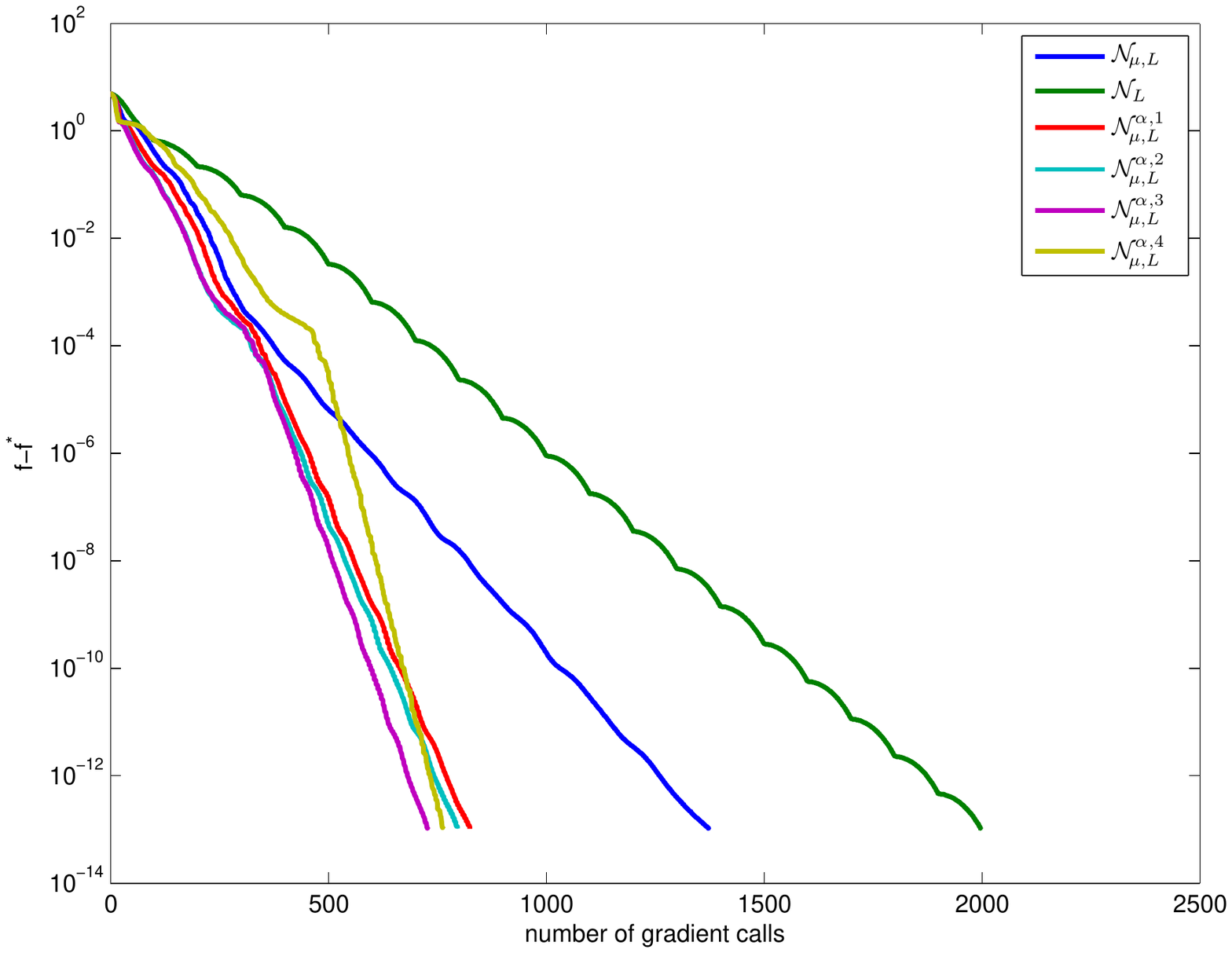}
  \includegraphics[width=0.45\textwidth, clip, trim=1.5cm 7cm 2.5cm
7cm]{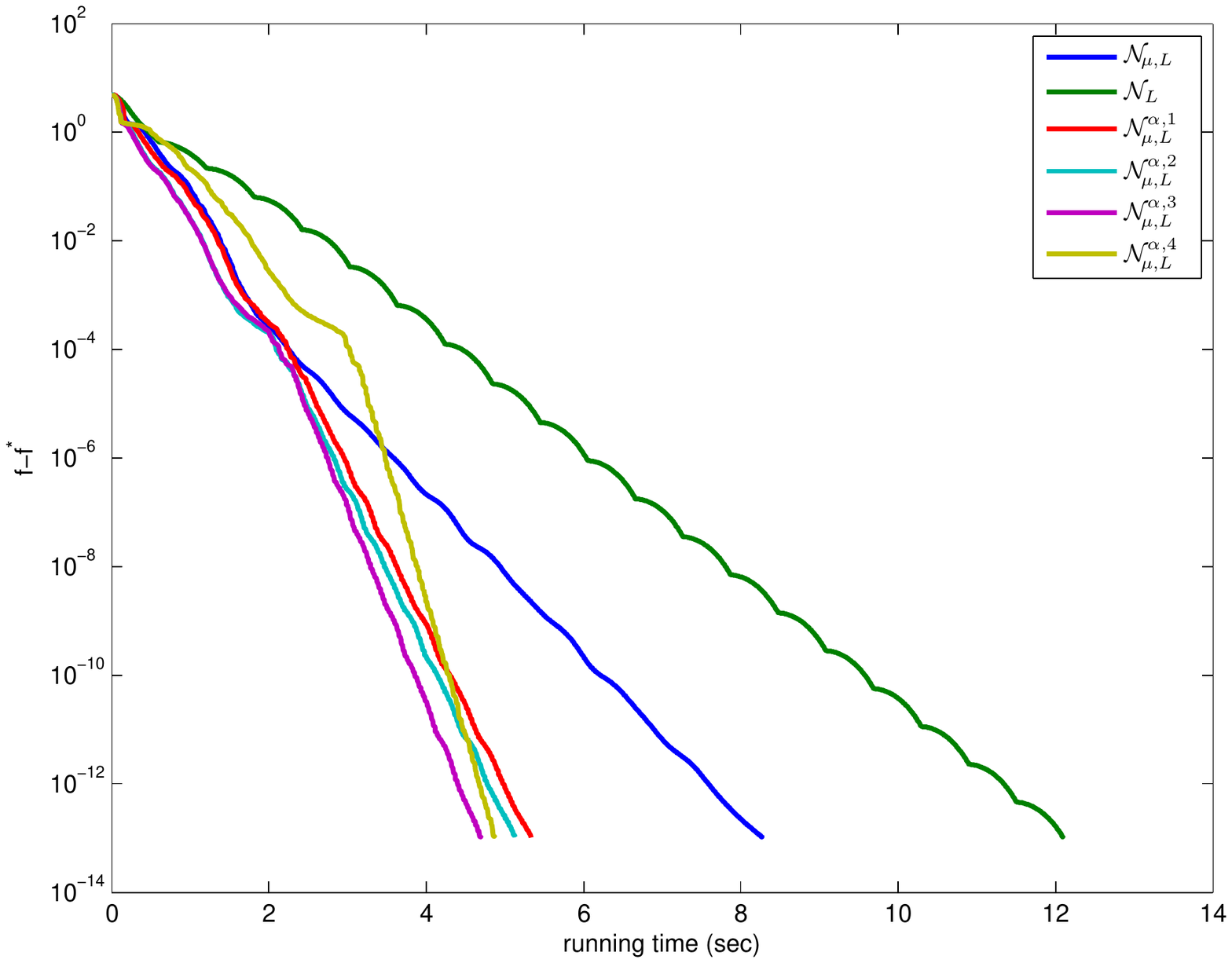}
  \includegraphics[width=0.45\textwidth, clip, trim=1.5cm 7cm 2.5cm
7cm]{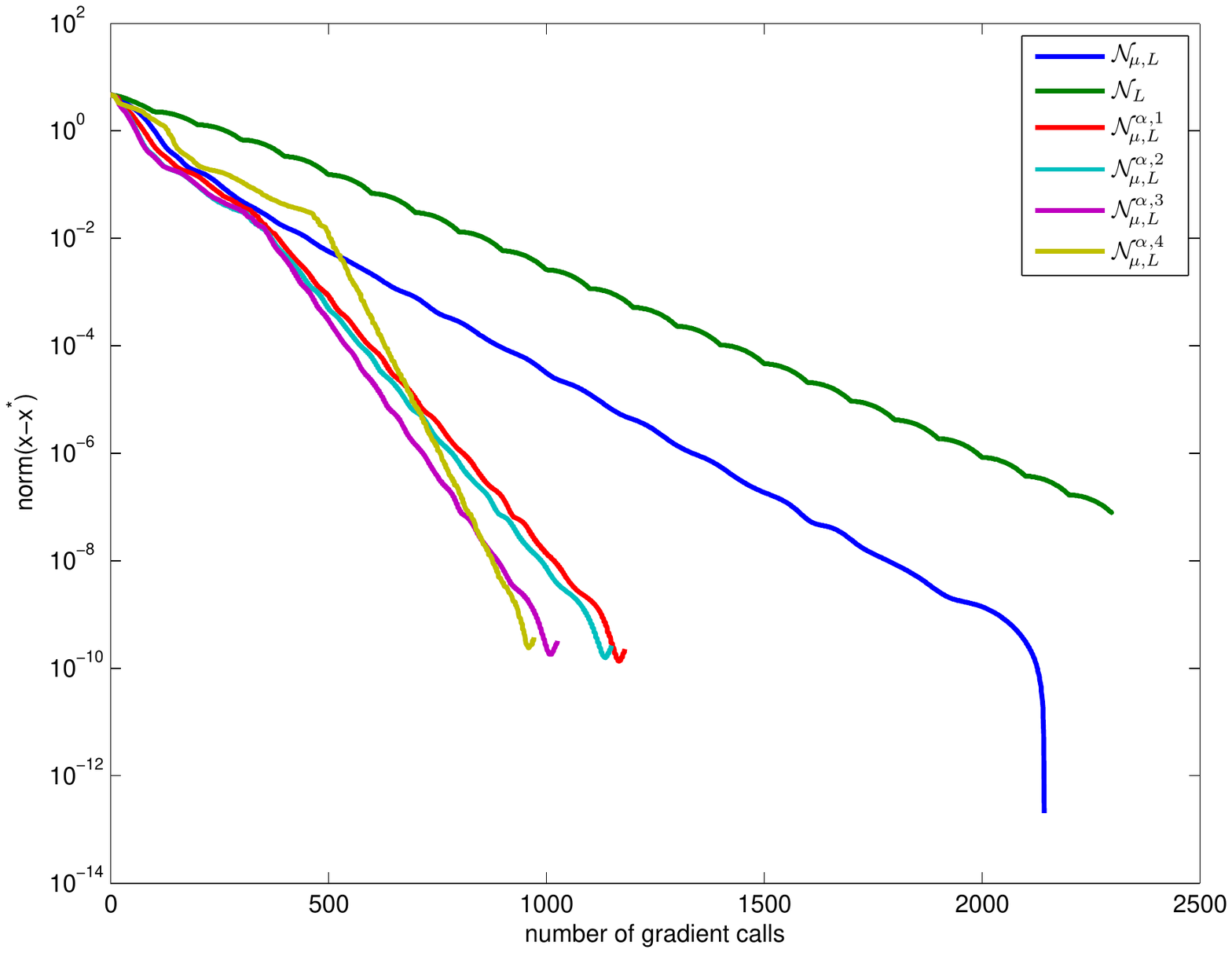}  
  \includegraphics[width=0.45\textwidth, clip, trim=1.5cm 7cm 2.5cm
7cm]{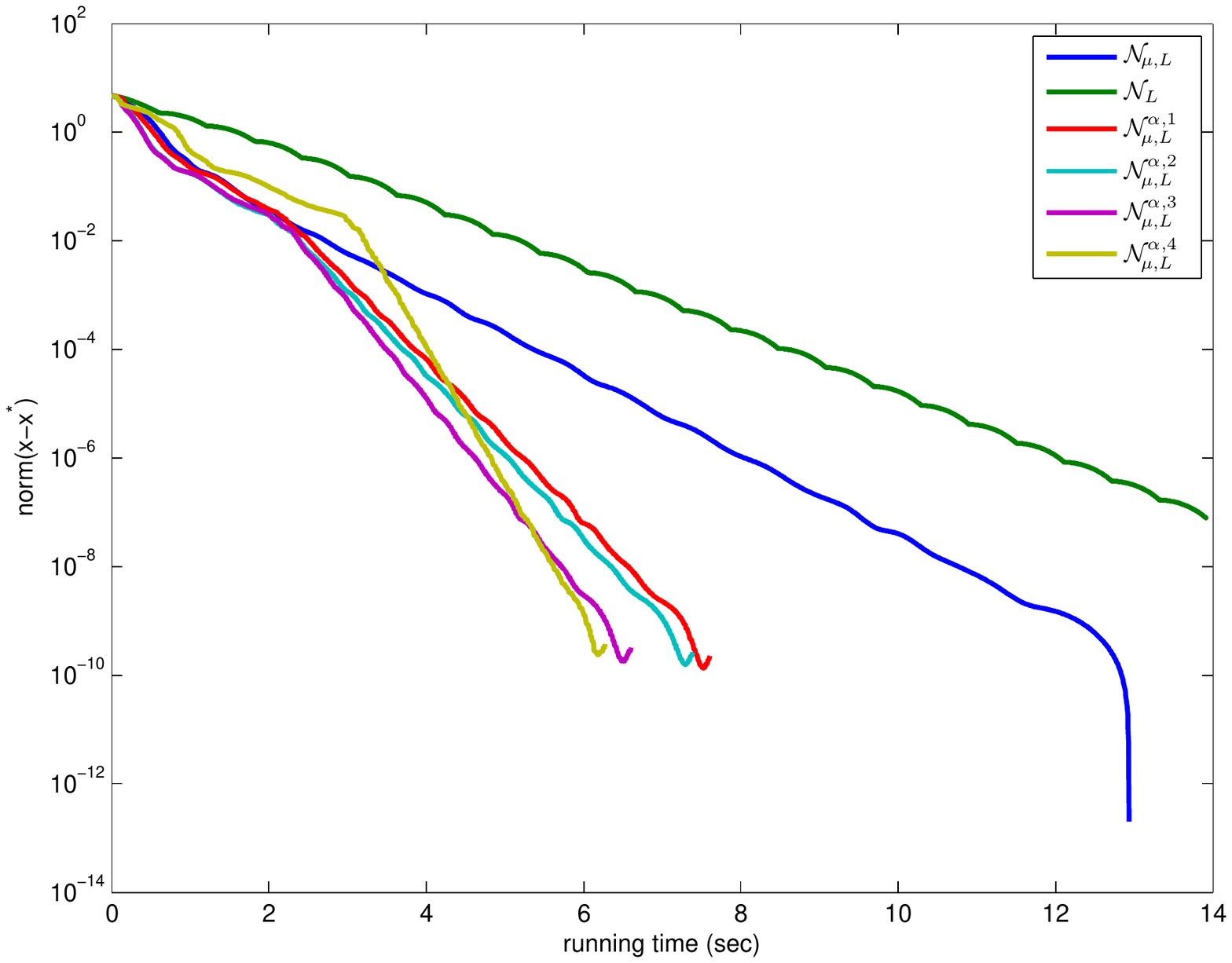} 
\caption{Smooth-BPDN. Top left: $f-f^*$ vs.\ number of gradient calls. Top
  right: $f-f^*$ vs.\ running time. Bottom left: $\|x-x^*\|_2$ vs.\ number of
  gradient calls. Bottom right: $\|x-x^*\|_2$ vs.\ running time. All the
  variants of $\mathcal{N}_{\mu,L}^\alpha$ converge faster than
  $\mathcal{N}_{\mu,L}$ or $\mathcal{N}_L$.}
  \label{fig:bpdn}
\end{figure}

\begin{figure} \centering
  \includegraphics[width=0.5\textwidth, clip, trim=1.5cm 7cm 2.5cm
  7cm]{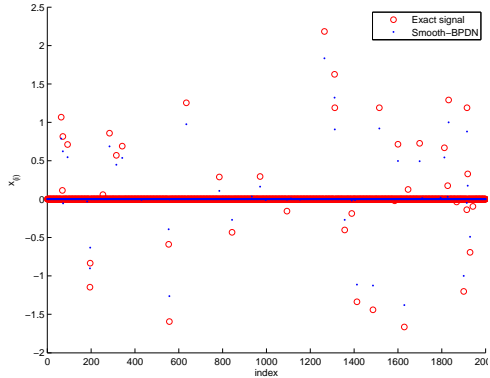}
  \caption{Exact signal vs.\ smooth-BPDN solution. The smooth-BPDN solution is
    very close to a soft-thresholded version of the exact signal. Smooth-BPDN
    solution recovers all the coefficients with large magnitude.}
  \label{fig:sig}
\end{figure}

Though the purpose of this test is not to recover sparse signals but to compare
$N_{\mu,L}^\alpha$ with competitive methods, we show that smooth-BPDN does
recover sparse signals and hence it has practical value as well. Figure
\ref{fig:sig} compares the smooth-BPDN solution with the exact signal. We see
the smooth-BPDN solution is very similar to a soft-thresholded version of the
exact signal. It recovers all the coefficients with large magnitude.

In summary, the proposed method $\mathcal{N}_{\mu,L}^\alpha$ can effectively
accelerate Nesterov's method $\mathcal{N}_{\mu,L}$ in all the tests we
present. Among the four variants, the first, second, and the third perform quite
similarly. The fourth, the most aggressive one, may fall back frequently, as we
see in the ridge regression case. Though it is the fastest method in the
smooth-BPDN test, we don't recommend it in general. Since the first heuristic is
the most conservative one and delivers comparable performance in all the three
tests, we suggest using $\mathcal{N}_{\mu,L}^{\alpha,1}$ as the default setting.


\section{Conclusion and future work}
\label{sec:conclusion}

We modified Nesterov's constant step gradient method for strongly convex
functions with Lipschitz gradient such that, at each iteration, we try to choose
an $\alpha_k > \sqrt{\rho}$ adaptively while preserving the estimate sequence,
where $\alpha_k$ controls the rate of decrease. $\mathcal{N}_{\mu,L}^\alpha$,
the modified method, has at least the same convergence speed as Nesterov's
method. Though it may evaluate the gradient function twice per iteration, in
practice it effectively accelerates the speed of convergence for many
problems. We propose four heuristics for choosing $\alpha_k$, compare their
performance in the numerical experiments, and suggest a default one to use.

Note that we don't utilize all the degrees of freedom in constructing our
method. The sequences $\{y_k\}$ and $\{x_k\}$ are still following Nesterov's, so
that we can reduce the number of calls to the gradient function. However,
further exploration on the choices of $\{y_k\}$, $\{x_k\}$, and $\{\alpha_k\}$
may help discover more efficient methods or help design variable step size
methods. We leave those possible directions as our future work.

The authors would like to thank Michael A.\ Saunders for useful comments on a
previous draft of this paper.


\bibliographystyle{siam}
\bibliography{acc_nesterov}

\end{document}